\subjclass[2010]{Primary: 14F42, 55P91 Secondary: 11E81, 19E15}
\keywords{Equivariant and motivic stable homotopy theory, equivariant Betti realization}
 \definecolor{dark-red}{rgb}{0.4,0.15,0.15}
\newcommand{\Q}{\mathbb{Q}} 
\newcommand{\C}{\mathbb{C}} 
\newcommand{\A}{\mathbb{A}}
\renewcommand{\P}{\mathbb{P}}
\newcommand{\Z}{\mathbb{Z}}
\newcommand{\ZZ}{\mathbb{Z}}
\newcommand{\R}{\mathbb{R}}
\newcommand{\RR}{\R}
\newcommand{\M}{\mathbb{M}}
\newcommand{\F}{\mathbb{F}}
\renewcommand{\AA}{\mathcal{A}}
\newcommand{\iso}{\cong}
\newcommand{\wkeq}{\simeq}
\newcommand{\Sm}{\mathrm{Sm}}
\newcommand{\RRe}{\mathrm{Re}}
\newcommand{\ReC}{\RRe^{C_2}_B}
\newcommand{\comp}[1]{^{\wedge}_{#1}}
\newcommand{\sset}{\mathrm{sSet}}
\newcommand{\SH}{\mathrm{SH}}
\newcommand{\HH}{\mathrm{H}}
\renewcommand{\SS}{\mathbb{S}}
\newcommand{\id}{\mathrm{id}}
\newcommand{\ul}[1]{\underline{\smash{#1}}}
\DeclareMathOperator*{\holim}{\mathrm{holim}}
\DeclareMathOperator{\spec}{\mathrm{Spec}}
\DeclareMathOperator{\Ext}{Ext}
\DeclareMathOperator{\Sym}{Sym}
\numberwithin{equation}{section} %Fiddles with numbering system of the following.
\theoremstyle{plain}
\newaliascnt{theorem}{equation}  
\newtheorem{theorem}[theorem]{Theorem}  
\newaliascnt{proposition}{equation}  
\newtheorem{proposition}[proposition]{Proposition}
\newaliascnt{lemma}{equation}  
\newtheorem{lemma}[lemma]{Lemma}
\newaliascnt{corollary}{equation}  
\newtheorem{corollary}[corollary]{Corollary}
\newaliascnt{claim}{equation}  
\newaliascnt{conjecture}{equation}  
 \theoremstyle{definition}
\newaliascnt{definition}{equation}  
\newaliascnt{example}{equation}  
\newaliascnt{remark}{equation}  
\newtheorem{remark}[remark]{Remark}
\newcommand{\aref}[1]{\autoref{#1}}
\begin{document}
\title[The stable Galois correspondence for real closed fields]{The stable Galois correspondence\\ for real closed fields}
\author{J. Heller}
\address{University of Illinois Urbana-Champaign}
\email{jbheller@illinois.edu}
% \thanks{}
\author{K. Ormsby}
\address{Reed College}
\email{ormsbyk@reed.edu}
\thanks{The second author was partially supported by National Science Foundation Award DMS-1406327}
\dedicatory{For Ivo and Irving.}

\begin{abstract}
In previous work \cite{HO:E2M}, the authors constructed and studied a lift of the Galois correspondence to stable homotopy categories.  In particular, if $L/k$ is a finite Galois extension of fields with Galois group $G$, there is a functor $c_{L/k}^*:\SH^G\to \SH_k$ from the $G$-equivariant stable homotopy category to the stable motivic homotopy category over $k$ such that 
$c_{L/k}^*(G/H_+) = \spec(L^H)_+$.  The main theorem of \cite{HO:E2M} says that when $k$ is a real closed field and $L=k[i]$, the restriction of $c_{L/k}^*$ to the $\eta$-complete subcategory is full and faithful.  Here we ``uncomplete'' this theorem so that it applies to $c_{L/k}^*$ itself.  Our main tools are Bachmann's theorem on the $(2,\eta)$-periodic stable motivic homotopy category and an isomorphism range for the map $\pi^\R_\star\SS_\R\to \pi^{C_2}_\star\SS_{C_2}$ induced by $C_2$-equivariant Betti realization.
\end{abstract}

\maketitle
% \tableofcontents

\section{Introduction}
In \cite{Levine:comparison}, Levine showed that the ``constant'' functor 
$c^*:\SH\to \SH_k$ from the classical stable homotopy category to the motivic stable homotopy category over an algebraically closed field of characteristic zero is a full and faithful embedding. Inspired by his result, in \cite{HO:E2M} we introduced and studied functors $c^*_{L/k}:\SH^{G}\to \SH_k$, where $L/k$ is a Galois extension with Galois group $G$. We showed that if $k$ is real closed and $L=k[i]$, then 
\emph{after completing} at a prime $p$ and at $\eta$, if $p\neq 2$, the functor $c^*_{L/k}$ is full and faithful. The need for the completion arose from our lack of knowledge about certain homotopy groups of the motivic sphere over $\R$. In the meantime advances have been made. Ananyevskiy-Levine-Panin \cite{alp:witt} established a motivic version of Serre's finiteness theorem, which in particular implies that $c^*_{L/k}$ is full and faithful after $\eta$-completion. 
The purpose of this paper is to use Bachmann's recent results \cite{bachmann:rho}, about a localization of $\pi^\R_\star(\SS_{\R})$, to remove the $\eta$-completions in the main theorem of \cite{HO:E2M}.

\begin{theorem}\label{thm:main}
Let $k$ be a real closed field and $L=k[i]$ be its algebraic closure. Then the functor
$$
c_{L/k}^*:\SH^{C_2}\to \SH_k
$$
is a full and faithful embedding.
\end{theorem}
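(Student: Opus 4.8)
The plan is to verify that $c^*:=c_{L/k}^*$ is full and faithful on a set of compact generators of $\SH^{C_2}$. Since $\SH^{C_2}$ is generated under colimits and twists by the orbit spectra $C_2/H_+$ ($H\le C_2$), and $c^*$ is a colimit-preserving symmetric monoidal functor with $c^*(C_2/H_+)=\spec(L^H)_+$, it suffices to prove that for all $H,H'\le C_2$ and all twists $V$ in the image of $RO(C_2)\to\mathrm{Pic}(\SH_k)$ the comparison map
\[
c^*\colon [\Sigma^V C_2/H_+,\, C_2/H'_+]^{C_2}\longrightarrow [\Sigma^{c^*V}\spec(L^H)_+,\,\spec(L^{H'})_+]_k
\]
is an isomorphism. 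Here I would invoke the basic structural fact from \cite{HO:E2M} that $\ReC\circ c^*$ is naturally equivalent to the identity of $\SH^{C_2}$ (for $k=\R$; for a general real closed $k$ one uses an analogous Galois-equivariant étale realization, or reduces to $k=\R$ by a rigidity argument): this exhibits each comparison map as a split injection with retraction induced by $\ReC$, so that it is enough to check that $\ReC$ is bijective on these Hom-groups. When $H$ or $H'$ is trivial, the induction--restriction adjunction on the equivariant side together with finite étale base change along $\spec L\to\spec k$ on the motivic side reduces the statement to the analogous comparison over the algebraically closed field $L$, which is Levine's theorem \cite{Levine:comparison}. The remaining case is $H=H'=C_2$, where one must show that $c^*$ --- equivalently, by the splitting, $\ReC$ --- induces an isomorphism $\pi^{C_2}_\star\SS_{C_2}\xrightarrow{\ \sim\ }\pi_\star\SS_k$ in every twisted degree.

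For this I would pass through the arithmetic fracture square for $\eta$: for any $E$ in $\SH_k$ or $\SH^{C_2}$ the square
\[
\begin{CD}
E @>>> E\comp{\eta}\\
@VVV @VVV\\
E[\eta^{-1}] @>>> (E\comp{\eta})[\eta^{-1}]
\end{CD}
\]
is a homotopy pullback, and the squares for $\SS_{C_2}$ and $\SS_k$ are compatible under $c^*$, since $c^*$ is monoidal and matches the Hopf classes. Hence $c^*$ is an isomorphism on $\pi_\star$ of the spheres if and only if it is so after $\eta$-completion and after inverting $\eta$. The $\eta$-completed assertion is precisely what the main theorem of \cite{HO:E2M} and the Ananyevskiy--Levine--Panin motivic Serre finiteness theorem \cite{alp:witt} yield together --- as recalled in the introduction, they combine to show $c^*$ is full and faithful after $\eta$-completion. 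So the entire remaining content is the $\eta$-inverted comparison.

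For the $\eta$-inverted comparison, both sides collapse dramatically. Over a real closed field $k$, inverting $\eta$, together with the relation between $\eta$ and $\rho=[-1]$, places one in the setting of Bachmann's theorem on the $(2,\eta)$-periodic stable motivic homotopy category \cite{bachmann:rho}, which identifies the relevant localization of $\SH_k$ with a localization of the classical stable homotopy category; under this identification $c^*$ becomes, via the comparison of geometric fixed points with real realization, essentially the identity functor, so full faithfulness there is automatic. On the equivariant side, inverting the Hopf class collapses $\SH^{C_2}$ onto a correspondingly explicit $\eta$-periodic category. To conclude, one combines the $\eta$-periodicity present on both sides with the isomorphism range for $C_2$-equivariant Betti realization --- the statement that $\ReC\colon\pi_\star\SS_k\to\pi^{C_2}_\star\SS_{C_2}$ is an isomorphism in an explicit bounded region, extracted from the connectivity of the cofiber of $\rho$ (equivalently of $a_\sigma$ on the equivariant side) and from the already-established isomorphism after $\eta$-completion --- arranging this region to contain a fundamental domain for the periodicities that survive $\eta$-inversion, so that periodicity propagates the isomorphism to every degree. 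I expect the main obstacle to be exactly this last synthesis: pinning down the isomorphism range with sharp enough bounds, and checking that after inverting $\eta$ no exotic, non-$\eta$-periodic homotopy classes of $\SS_k$ persist outside the controlled range --- which is precisely the point at which one needs Bachmann's structural identification of the $(2,\eta)$-periodic category rather than a computation valid only in a range.
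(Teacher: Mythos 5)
Your top-level reduction is the same as the paper's (Proposition \ref{prop:b}): generate $\SH^{C_2}$ by the orbits, use $\ReC\circ c^*_{\C/\R}=\id$ to turn full faithfulness of $c^*$ into bijectivity of $\ReC$ on the relevant hom groups, and dispose of the free orbit by Levine's theorem. But two things go wrong in the remaining case $H=H'=C_2$. First, you assert that you must (and will) show $\ReC:\pi^{\R}_{\star}\SS_{\R}\to\pi^{C_2}_{\star}\SS_{C_2}$ is an isomorphism \emph{in every twisted degree}. That statement is false --- already $2$-adically Dugger--Isaksen only obtain an isomorphism in the range $m\geq 2n-5$, and outside such a range the map genuinely fails to be an isomorphism --- and your closing idea of ``propagating the isomorphism to every degree by periodicity'' cannot repair this. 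Fortunately it is also unnecessary: the triangulated subcategory generated by the orbits under \emph{integer} shifts is all of $\SH^{C_2}$, so only the weight-zero groups $[S^n,\SS]$ are needed, which is exactly why the paper only needs its Theorem \ref{thm:betti} for $n=0$.

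Second, your decomposition fractures along $\eta$ first and then appeals to Bachmann for the $\eta$-inverted piece. Bachmann's theorem (Theorem \ref{thm:Bman}) identifies $\SH_{\R}[1/2,\eta^{-1}]$ with $\SH^{C_2}[1/2,\eta^{-1}]$; it says nothing about the \emph{integral} $\eta$-periodic spheres, and $2$ is not invertible there (e.g.\ $\pi_0$ of either $\eta$-periodic sphere is $\Z$, via $W(\R)$ and $\pi_0\Phi^{C_2}\SS$ respectively), so your $\eta$-inverted comparison is not covered by the cited input. The paper's order of operations is the point: fracture arithmetically first, so that at odd primes and rationally $2$ is invertible and Morel's $\pm$-splitting (Lemma \ref{lemma:split}) cleanly separates an $\eta$-complete piece (handled by new odd-primary Adams spectral sequence estimates, Lemmas \ref{lem:coarse}--\ref{lem:cobar} and Theorem \ref{thm:isorange}) from an $\eta$-inverted piece where Bachmann applies; the $2$-complete piece is handled entirely by Dugger--Isaksen with no $\eta$-inversion at all. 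Relatedly, you cite ``the isomorphism range for $\ReC$'' as an input extracted from connectivity of the cofiber of $\rho$ plus the $\eta$-complete statement; in fact that range is the paper's main technical theorem and requires the odd-primary cobar-complex analysis and the rational computation ($(\SS_{\R})^+_{\Q}\simeq\HH\Q$) --- it does not follow from the ingredients you list. As written, the proposal therefore has a genuine gap at the integral, $2$-primary, $\eta$-inverted part of the comparison, and assumes as known the quantitative result whose proof is the substance of the paper.
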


One of the primary tools in the proof of \aref{thm:main} is the $C_2$-equivariant Betti realization functor $\ReC:\SH_\R\to \SH^{C_2}$ which extends the functor taking a smooth $\R$-scheme to its $\C$-points with complex conjugation action.  In \cite{DI:stems}, Dugger-Isaksen study the effect of this functor on the bigraded homotopy groups of the real motivic sphere spectrum, proving that $\ReC:\pi^\R_{m+n\alpha}(\SS_\R)\comp{2}\to \pi^{C_2}_{m+n\sigma}(\SS_{C_2})\comp{2}$ is an isomorphism when $m\ge 2n-5$.  We use odd-primary Adams spectral sequences and Bachmann's results to produce a range of bigradings in which the integral version of this map is an isomorphism.  In particular, we prove the following.

\begin{theorem}[\aref{thm:betti}]
	The map $\pi^{\R}_{m+n\alpha}(\SS_{\R})\to
	\pi^{C_2}_{m+n\sigma}(\SS_{C_2})$ is
	\begin{enumerate}[(i)]
		\item an injection if $m=2n-6$ and $n\geq 0$, and
		\item  an isomorphism if $m\geq 2n-5$ and $n\geq 0$.
	\end{enumerate}
\end{theorem}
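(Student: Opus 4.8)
In the range of interest the groups $\pi^\R_{m+n\alpha}(\SS_\R)$ and $\pi^{C_2}_{m+n\sigma}(\SS_{C_2})$ are finitely generated abelian groups — the former by the motivic finiteness theorem of Ananyevskiy--Levine--Panin \cite{alp:witt} together with Morel's computation of the zero line, the latter because $\SS_{C_2}$ is cellular of finite type — so it is enough to prove the asserted injectivity and surjectivity after rationalization and after $p$-completion at each prime $p$. At $p=2$ the isomorphism for $m\ge 2n-5$ is exactly the theorem of Dugger--Isaksen \cite{DI:stems}; pushing their comparison of the $\rho$-Bockstein (on the $\R$-motivic Adams spectral sequence) and the $a_\sigma$-Bockstein (on the $C_2$-equivariant Adams spectral sequence) one stem further yields the injectivity claim at $m=2n-6$. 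The remaining content is that, away from $2$, $\ReC$ is an isomorphism in a range strictly larger than the Dugger--Isaksen range, so that the $2$-complete statement controls the final answer.

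At an odd prime $p$ two simplifications occur. On the motivic side, $-1$ is a $p$-th power in a real closed field, so $\rho$ vanishes mod $p$ and the mod-$p$ motivic cohomology of $\spec\R$ — hence the $\R$-motivic mod-$p$ (dual) Steenrod algebra — coincides with the complex one; the $p$-complete $\R$-motivic Adams spectral sequence is therefore the classical odd-primary Adams spectral sequence base-changed along $\F_p\to\F_p[\tau]$, with $\tau$ realizing to a unit. On the equivariant side, $2$ is invertible in $\Z_p$, the Tate construction $\SS^{tC_2}\comp{p}$ vanishes, and the isotropy separation sequence splits: $\pi^{C_2}_{m+n\sigma}(\SS_{C_2})\comp{p}$ is the sum of $\pi_m\bigl(\Phi^{C_2}\SS_{C_2}\bigr)\comp{p}=\pi_m(\SS)\comp{p}$ and the contribution of $EC_{2+}\wedge\SS_{C_2}$, which is the $p$-complete stable homotopy of a stunted real projective spectrum, hence of a sphere since $BC_2$ is mod-$p$ acyclic. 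Betti realization respects all of this, and a comparison of $E_2$-pages — both assembled from $\Ext$ over the classical odd-primary Steenrod algebra — shows $\ReC$ is an isomorphism on $p$-complete homotopy in a range containing $\{m\ge 2n-6,\ n\ge 0\}$.

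Rationally, Morel's decomposition $\SS_{\R,\Q}\simeq\SS^{+}_{\R,\Q}\times\SS^{-}_{\R,\Q}$ applies: the plus summand is the rational motivic cohomology spectrum, whose homotopy over $\R$ in the cone $n\ge 0$ is $\Q$ concentrated in bidegree $(0,0)$, while the minus summand is $\eta$-periodic and equals $\SS_{\R,\Q}[\eta^{-1}]$ — here one invokes Bachmann's computation of the $(2,\eta)$-periodic motivic sphere over $\R$ \cite{bachmann:rho} to read off $\pi^\R_\star(\SS_\R[\eta^{-1}])$. On the equivariant side rational Mackey functors are semisimple, so $\SS_{C_2,\Q}$ splits as the product of its geometric fixed point spectrum and its free part; using $\ReC(\rho)=a_\sigma$, the minus summand of $\SS_{\R,\Q}$ matches the geometric summand and the plus summand the free part, and a direct comparison gives a rational isomorphism in a range containing $\{m\ge 2n-6,\ n\ge 0\}$.

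Assembling the $p=2$, odd-$p$, and rational statements via the arithmetic fracture square then gives the theorem. The principal obstacle is the rational — equivalently the $\eta$-periodic, minus-part — comparison: controlling $\pi^\R_\star(\SS_\R[\eta^{-1}])$ and its realization is precisely what was unavailable in \cite{HO:E2M}, forcing the $\eta$-completion there, and it is Bachmann's theorem that removes it. A secondary, more bookkeeping difficulty is to check that the odd-primary and rational comparisons hold in a range at least as large as the Dugger--Isaksen range, including the boundary behavior at $m=2n-6$, so that the stated ranges come out exactly.
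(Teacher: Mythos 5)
Your architecture --- arithmetic fracture, Dugger--Isaksen at $p=2$, an odd-primary Adams-type comparison, and a rational $\pm$-splitting resting on Bachmann's theorem --- is the same as the paper's, and your rational paragraph matches the paper's argument essentially verbatim. The gaps are in the initial reduction and in the odd-primary step. Your opening reduction assumes $\pi^\R_{m+n\alpha}(\SS_\R)$ is finitely generated, citing \cite{alp:witt}; that theorem gives rational vanishing statements, not finite generation, and finite generation of these groups is not available a priori (it is closer to an output of the present theorem than an input). The paper instead compares the long exact sequences coming from the fracture cofiber sequence $\SS\to\bigl(\prod_p\SS\comp{p}\bigr)\vee\SS_\Q\to\bigl(\prod_p\SS\comp{p}\bigr)_\Q$ and treats the third corner as a filtered colimit of the $p$-complete comparison maps; you should do the same. (Also, Dugger--Isaksen already prove the injection at $m=2n-6$, so no extension of their argument is needed there.)

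The more serious gap is that your odd-primary step conflates $p$-completion with $(p,\eta)$-completion. The $\R$-motivic mod-$p$ Adams spectral sequence converges to $\pi_\star(\SS_\R)\comp{p,\eta}$, and over $\R$ one has $\SS\comp{p}\simeq\SS\comp{p,\eta}\vee\SS\comp{p}[\eta^{-1}]$ with nonzero $\eta$-periodic summand (it carries the Witt-theoretic classes), so an $E_2$-page comparison controls only the first summand. Matching the $\eta$-periodic summand with the geometric-fixed-point summand you identify on the equivariant side requires Bachmann's equivalence $\SH_\R[1/2,\eta^{-1}]\simeq\SH^{C_2}[1/2,\eta^{-1}]$, which you invoke only rationally; supplying it at each odd $p$ is exactly \aref{prop:oddp} in the paper. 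Relatedly, the mod-$p$ coefficients over $\R$ are $\F_p[\theta]$ with $|\theta|=2-2\alpha$ (the invariants of the complex coefficients, not the complex coefficients themselves), versus $\F_p[\theta,\theta^{-1}]$ equivariantly, and the quantitative content of the odd-primary step --- which you defer as bookkeeping --- is precisely the vanishing-line estimate locating the $\theta^{-n}$-multiples in the cobar cokernel in degrees $k\le\frac{p}{p-1}\ell+f-\frac{4p-2}{p-1}$, from which one checks that the odd-primary isomorphism range is dominated by the $2$-primary one. Without that estimate the stated ranges are not established.
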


\subsection*{Outline} 
In \aref{sec:prelim} we recall Bachmann's theorem and deduce some consequences for the $C_2$-equivariant Betti realization functor and Morel's $\pm$-splitting of the $2$-periodic stable motivic homotopy category.  In \aref{sec:stems} we adapt the methods of \cite{DI:stems} to odd-primary Adams spectral sequences.  Via an arithmetic fracture square and the results of \aref{sec:prelim}, we deduce \aref{thm:betti}.  Finally, in \aref{sec:proof} we recall how to bootstrap \aref{thm:betti} into a proof of \aref{thm:main}.

\subsection*{Notation}
We use the following notation throughout the paper.
\begin{itemize}
\item $k$ is a field and $L/k$ is a finite Galois extension of fields with Galois group $G$.
\item $\SH_k$ is Voevodsky's stable motivic homotopy category \cite{v:icm}; hom sets in $\SH_k$ are denoted $[~,~]_k$.
\item $\SH^G$ is the $G$-equivariant stable homotopy category in the sense of \cite{lms}; hom sets in $\SH^G$ are denoted $[~,~]_G$.
\item $c^*_{L/k}:\SH^G\to \SH_k$ is the functor induced by the classical Galois correspondence $G/H\mapsto \spec(L^{H})$, constructed in \cite[Section 4.3]{HO:E2M}.\footnote{In \emph{loc.~cit.~}we state that the category $G\sset$ of $G$-simplicial sets is equivalent to the category $\mathrm{sPre}(\mathrm{Or}_G)$ of simplicial presheaves on the orbit category. This isn't quite true: $G\sset\subseteq \mathrm{sPre}(\mathrm{Or}_G)$ is only a full subcategory. (Thanks to Tom Bachmann for drawing our attention to this inaccuracy.) This has no effect on any of the subsequent mathematics in \emph{loc.~cit.~}because what is used is that the associated homotopy categories are equivalent and this is true by Elmendorf's Theorem. }
\item $\SS_k$ is the sphere spectrum in $\SH_k$ and $\SS_G$ is the sphere spectrum in $\SH^G$. 
\item For integers $m,n$, $S^{m+n\alpha} = (S^1)^{\wedge m}\wedge (\A^1\smallsetminus 0)^{\wedge n}$.  If $G=C_2$, $S^{m+n\sigma} = (S^1)^{\wedge m}\wedge (S^\sigma)^{\wedge n}$
where $S^\sigma$ is the one-point compactification of the real sign representation.
\item For $X\in \SH_k$, $\pi^k_{m+n\alpha}X := [S^{m+n\alpha},X]_k$ is the $(m+n\alpha)$-th homotopy group of $X$.
\item For $X\in \SH^{C_2}$, $\pi^{C_2}_{m+n\sigma}X := [S^{m+n\sigma},X]_{C_2}$ is the $(m+n\sigma)$-th homotopy group of $X$.
\item For $X\in \SH_k$, $\pi_*^kX := \bigoplus_{m\in \ZZ}\pi_{m+0\alpha}X$ and similarly for $\pi_*^{C_2}Y$ when $Y\in \SH^G$.
\item Given an embedding $\phi:k\hookrightarrow \RR$, $\RRe_{B,\phi}^{C_2} = \ReC:\SH_k\to \SH^{C_2}$ is the $C_2$-equivariant Betti realization which extends the functor taking a smooth $k$-scheme $X$ to $X(\C)$ with the conjugation action \cite[\S 4.4]{HO:E2M}.
\item Depending on context, $\eta$ is either the motivic Hopf map arising from 
$\A^2\smallsetminus 0\to \P^1$ or the $C_2$-equivariant Hopf map.
\item $(~)\comp{2}$ is the $2$-completion functor and $(~)\comp{\eta}$ is the $\eta$-completion functor.  If $a = 2$ or $\eta$ we have $X\comp{a} = \holim X/a^n$.
\item We write $X[a^{-1}]$ or $X[1/a]$ for the homotopy colimit of $X\xrightarrow{a} X\xrightarrow{a}\cdots$.  If $X\simeq X[a^{-1}]$ we say that $X$ is \emph{$a$-periodic}.
\end{itemize}

\section{Preliminaries}\label{sec:prelim}
The main new input we use in this paper is a recent theorem of Bachmann \cite[Theorem 31]{bachmann:rho}. His theorem compares a localization of $\SH_{\R}$ with the classical stable homotopy category.  The most convenient form of his result for us is the following recasting.
\begin{theorem}[Bachmann]\label{thm:Bman}
 Betti realization induces an equivalence of triangulated categories
 $$
 \RRe^{C_2}_B:\SH_{\R}[1/2,\eta^{-1}]
\xrightarrow{\wkeq} 
\SH^{C_2}[1/2, \eta^{-1}].
 $$
 \end{theorem}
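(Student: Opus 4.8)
The plan is to reduce the statement to Bachmann's theorem \cite[Theorem~31]{bachmann:rho} by replacing, on each side of the desired equivalence, the localization appearing in \emph{loc.\ cit.} with the $(2,\eta)$-periodic one. Recall that the motivic element $\rho=[-1]\colon\SS_\R\to\SS_\R\wedge\G_m$ realizes under $\ReC$ to the Euler class $a_\sigma\colon S^0\to S^\sigma$ of the sign representation, and that over $\R$ — whose real étale site is a point — Bachmann's theorem asserts that real realization induces an equivalence $\SH_\R[\rho^{-1}]\xrightarrow{\simeq}\SH$. Since real realization agrees with $\Phi^{C_2}\circ\ReC$ \cite{HO:E2M}, and geometric fixed points restrict to an equivalence $\SH^{C_2}[a_\sigma^{-1}]\xrightarrow{\simeq}\SH$ on the $\widetilde{EC_2}$-local (equivalently, $a_\sigma$-periodic) subcategory, Bachmann's theorem is equivalent to the assertion that $\ReC$ carries $\SH_\R[\rho^{-1}]$ equivalently onto $\SH^{C_2}[a_\sigma^{-1}]$. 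Granting this, it remains to prove: (a) inverting $\eta$ and inverting $\rho$ define the same localization of $\SH_\R[1/2]$; and (b) inverting $\eta$ and inverting $a_\sigma$ define the same localization of $\SH^{C_2}[1/2]$.

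For (a) I would use the $\pm$-splitting of $\SH_\R[1/2]$ into the summands on which $\langle-1\rangle$ acts by $+1$, resp.\ $-1$. On the $-$ summand the Milnor--Witt relation $\eta\rho=\langle-1\rangle-1$ reads $\eta\rho=-2$, a unit once $2$ is inverted, so $\eta$ and $\rho$ are both invertible there and inverting either changes nothing. On the $+$ summand the relations $\langle-1\rangle\eta=-\eta$ and $\eta\rho^2=-2\rho$ (consequences of $\langle u\rangle=1+\eta[u]$ and $\eta h=0$) force $\eta=0$ and $\rho=0$, so inverting either kills this summand. Hence $\SH_\R[1/2,\eta^{-1}]$ and $\SH_\R[1/2,\rho^{-1}]$ both coincide with the $-$ summand, via the canonical localization functors. (Alternatively, $\rho=0$ on the $+$ summand follows formally from Bachmann's theorem and the indecomposability of $\SH[1/2]$.)

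For (b) I would argue by isotropy separation. The underlying spectrum of $\SS_{C_2}[1/2,\eta^{-1}]$ is $\SS[1/2,\eta^{-1}]=0$, since the underlying Hopf map is the classical $\eta$, which is $2$-torsion; smashing the cofiber sequence $EC_{2+}\to S^0\to\widetilde{EC_2}$ with $\SS_{C_2}[1/2,\eta^{-1}]$ therefore kills the free summand, because a free $C_2$-spectrum with vanishing underlying spectrum is contractible. Thus $\SS_{C_2}[1/2,\eta^{-1}]$ is $\widetilde{EC_2}$-local, i.e.\ $a_\sigma$-periodic, and $\SH^{C_2}[1/2,\eta^{-1}]$ is a localization of $\SH^{C_2}[1/2,a_\sigma^{-1}]\simeq\SH[1/2]$. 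Under geometric fixed points $\eta$ acts as $\Phi^{C_2}(\eta)\in\pi_0^s=\Z$, which is the real realization of the motivic $\eta$ — namely the degree $\pm 2$ of the double cover $(\A^2\smallsetminus 0)(\R)\to\P^1(\R)$ — hence already invertible after inverting $2$. So $\SH^{C_2}[1/2,\eta^{-1}]=\SH^{C_2}[1/2,a_\sigma^{-1}]$.

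Assembling the pieces: $\ReC$ is symmetric monoidal and sends $\rho$ to $a_\sigma$, so it descends to a functor $\SH_\R[1/2,\rho^{-1}]\to\SH^{C_2}[1/2,a_\sigma^{-1}]$, which by (a) and (b) is precisely the functor $\SH_\R[1/2,\eta^{-1}]\to\SH^{C_2}[1/2,\eta^{-1}]$ of the statement, and which by Bachmann's theorem (with $2$ additionally inverted) is an equivalence. I expect the only real work to be bookkeeping: verifying that the $\pm$-splitting, the isotropy separation, and geometric fixed points are all implemented by the canonical localization, base-change, and fixed-point functors, so that the square relating $\ReC$, $\Phi^{C_2}$, real realization, and Bachmann's equivalence genuinely commutes. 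The homotopy-theoretic content is entirely \cite[Theorem~31]{bachmann:rho}.
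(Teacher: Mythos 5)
Your proposal is correct and follows essentially the same route as the paper: factor $\ReC$ against $\RRe_\R=\Phi^{C_2}\circ\ReC$, identify the $(1/2,\eta^{-1})$-localizations with the $\rho$- resp.\ $a_\sigma$-inverted categories, and invoke Bachmann's Theorem~31 together with the geometric fixed points equivalence. The only difference is presentational — the paper dispatches the localization comparisons with the single relation $\eta^2\rho=-2\eta$ and a two-out-of-three argument on a commutative triangle, whereas you spell them out via the $\pm$-splitting and isotropy separation — but the mathematical content is identical.
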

\begin{proof}
 Consider the functor
 $\RRe_{\R}:\SH_{\R}\to \SH$ which extends the  functor $\Sm_{\R}\to {\rm Top}$, sending $X$ to $X(\R)$.  
 Consider as well the composite $\Phi^{C_2}\circ\RRe^{C_2}_{B}$, where 
 $\Phi^{C_2}:\SH^{C_2}\to \SH$ is the geometric fixed points functor.  Both 
 $\RRe_{\R}$ and $\Phi^{C_2}\circ\RRe^{C_2}_{B}$ preserve homotopy colimits and if $X\in \Sm_{\R}$ and $n\in \Z$, they both send $\Sigma^{n}_{\P^1}\Sigma^{\infty}_{\P^1}X_+$ to the spectrum $\Sigma^{n}\Sigma^{\infty}X(\R)_+$. It follows that $\RRe_{\R} = \Phi^{C_2}\circ\RRe^{C_2}_B$ and so we have the commutative triangle of functors
 $$
 \xymatrix{
 \SH_{\R}[1/2,\eta^{-1}] \ar[r]^{\RRe^{C_2}_B}\ar[dr]_{\RRe_\R} & \SH^{C_2}[1/2,\eta^{-1}]\ar[d]^{\Phi^{C_2}}\\
 & \SH[1/2].
 }
 $$
 
Write $\rho:S^{0}\to S^{\sigma}$ for the standard inclusion. Since $\eta^{2}\rho = -2\eta$, we have the equivalence of categories
 $\SH^{C_2}[1/2,\eta^{-1}] \wkeq \SH^{C_2}[1/2,\rho^{-1}]$. It follows that  $\Phi^{C_2}$ is an equivalence
$\SH^{C_2}[1/2,\eta^{-1}] \wkeq \SH[1/2]$. A specialization of Bachmann's theorem \cite[Theorem 31]{bachmann:rho} says that $\RRe_\R$ in the above diagram is an equivalence.  We conclude that $\ReC$ is an equivalence as well.

\end{proof}

Recall Morel's $\pm$-operations in motivic homotopy theory 
(see \cite[Section 16.2]{CD:motives}).  Let $\varepsilon$ denote the stable map induced by the twist isomorphism $S^\alpha\wedge S^\alpha \simeq S^\alpha\wedge S^\alpha$.  In the $C_2$-equivariant setting, let $\varepsilon$ denote the twist $S^\sigma\wedge S^\sigma\simeq S^\sigma\wedge S^\sigma$, and note that $\ReC(\varepsilon)=\varepsilon$.  In either the motivic or equivariant setting, invert $2$ and note that $e_{+} := (\varepsilon - 1)/2$ and $e_- := (\varepsilon + 1)/2$ are orthogonal idempotents.  Let the operation $(~)^+$ denote inversion of $e_+$ and let $(~)^-$ denote inversion of $e_-$, \emph{i.e.}, $(~)^\pm$ is the cofiber of the operation $e_{\pm}$.  For any $2$-periodic motivic or $C_2$-equivariant spectrum $X$ there is a natural splitting $X\simeq X^+\vee X^-$.

\begin{lemma}\label{lemma:split}
If $X$ is a $2$-periodic motivic or $C_2$-equivariant spectrum, then $X^+\simeq X\comp{\eta}$ and $X^-\simeq X[\eta^{-1}]$.  
\end{lemma}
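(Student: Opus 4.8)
The plan is to reduce both equivalences to a single computation — that $\eta$ acts invertibly on $X^-$ and as $0$ on $X^+$ — after which everything is formal. Unwinding the construction of the $\pm$-splitting, $X^+$ is the summand on which $e_+=(\varepsilon-1)/2$ acts invertibly — equivalently, on which $\varepsilon$ acts by $-1$ — and $X^-$ the summand on which $\varepsilon$ acts by $+1$. The one extra ingredient I would invoke is Morel's relation $\eta\rho=-(1+\varepsilon)$ in $\pi_\star\SS_k$ (equivalently $\eta\rho=\langle-1\rangle-1$ with $\langle-1\rangle=-\varepsilon$), together with its $C_2$-equivariant counterpart in $\pi_\star^{C_2}\SS_{C_2}$; this relation, multiplied by $\eta$ and simplified via $\eta\varepsilon=\eta$, gives the identity $\eta^2\rho=-2\eta$ already used in the proof of \aref{thm:Bman}.

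Granting the relation, on $X^+$ the element $\varepsilon$ acts by $-1$, so $\eta\rho=-(1+\varepsilon)$ acts by $0$; hence $-2\eta=\eta\cdot(\eta\rho)$ acts by $0$, and since $2$ is invertible on $X^+$ (a retract of the $2$-periodic $X$) we conclude that $\eta$ acts as $0$ on $X^+$. A spectrum on which $\eta$ acts by $0$ has $X^+[\eta^{-1}]\wkeq 0$ (a telescope of null maps) and is $\eta$-complete, since the tower $\{X^+/\eta^n\}_n$ is pro-equivalent to the constant tower on $X^+$; thus $X^+\wkeq X^+\comp{\eta}$. Dually, on $X^-$ the element $\varepsilon$ acts by $+1$, so $\eta\rho$ acts by $-2$, which is invertible; hence $\eta$ acts invertibly on $X^-$ (with inverse $-\rho/2$), so $X^-[\eta^{-1}]\wkeq X^-$ while $X^-/\eta^n\wkeq 0$ for all $n$, whence $X^-\comp{\eta}\wkeq 0$. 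Since $(-)\comp{\eta}$ and $(-)[\eta^{-1}]$ are exact and preserve finite coproducts (being respectively a homotopy limit over a tower and a filtered homotopy colimit), applying them to $X\wkeq X^+\vee X^-$ yields $X\comp{\eta}\wkeq X^+\comp{\eta}\vee X^-\comp{\eta}\wkeq X^+$ and $X[\eta^{-1}]\wkeq X^+[\eta^{-1}]\vee X^-[\eta^{-1}]\wkeq X^-$, as claimed.

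The step I expect to carry the real content is justifying the relation $\eta\rho=-(1+\varepsilon)$ in each of the two settings. Motivically this is part of Morel's computation of $\pi_0$ of the sphere spectrum and holds over any base. $C_2$-equivariantly one needs the analogous identity in $\pi_0^{C_2}\SS_{C_2}\cong A(C_2)$, which can be checked directly — for instance via the injection of $A(C_2)$ into $\pi_0\SS\times\pi_0\SS$ given by restriction and geometric fixed points, under which $\varepsilon$, $\eta$ and $\rho$ are all readily evaluated. A smaller routine point, which I would also spell out, is the claim that $\eta$ acting as $0$ forces $\eta$-completeness: each bonding map in the tower $\{X^+/\eta^n\}$ splits as the identity on $X^+$ together with a summand that dies pro-systemically, so the homotopy limit is $X^+$ and the associated $\lim^1$-term vanishes.
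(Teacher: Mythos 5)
Your proof is correct and follows essentially the same route as the paper's: both rest on Morel's relation $\varepsilon=-1-\rho\eta$ (and its consequence $\eta^2\rho=-2\eta$) to show that $\eta$ acts as zero on $X^+$ and invertibly on $X^-$, and then read off the two identifications from the splitting $X\simeq X^+\vee X^-$. The only cosmetic difference is that the paper identifies $X^-$ directly as the localization inverting $e_-=-\tfrac12\rho\eta$ (hence inverting $2$ and $\eta$), whereas you deduce the same fact by applying $(-)[\eta^{-1}]$ and $(-)\comp{\eta}$ to the splitting; the extra details you supply (the $C_2$-equivariant form of Morel's relation, and why $\eta=0$ forces $\eta$-completeness) are points the paper leaves implicit.
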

\begin{proof}
We prove the motivic version of this statement, which easily adapts to the $C_2$-equivariant setting.  Let $X$ be a spectrum such that $X\simeq X[1/2]$.  We have $\varepsilon = -1-\rho\eta$ whence $e_- = \frac{-1}{2}\rho\eta$.  Thus inverting $e_-$ inverts both $\rho$ and $\eta$.  Since $(2+\rho\eta)\eta = 0$, this is the same as inverting $2$ and $\eta$, whence $X^- \simeq X[\eta^{-1}]$.  Now apply $\eta$-completion to the splitting $X\simeq X^+\vee X^-$ to get $X\comp{\eta} \simeq (X^+)\comp{\eta}\vee (X^-)\comp{\eta}$.  Since $X^-\simeq X[\eta^{-1}]$, the second summand is trivial.  Since $e_+\eta = 0$, $X^+$ is $\eta$-complete, \emph{i.e.}, $(X^+)\comp{\eta}\simeq X^+$.  We conclude that $X\comp{\eta}\simeq X^+$, as desired.
\end{proof}

As an interesting corollary (which we will not use in the remainder of this paper) we note the following. 

\begin{proposition}\label{lemma:ReEta}
The natural map $\RRe^{C_2}_B((\SS_\RR)\comp{\eta})\to (\SS_{C_2})\comp{\eta}$ is an equivalence.
\end{proposition}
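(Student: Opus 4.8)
The point is that $\ReC$, being a left adjoint, does not commute with the homotopy limit $\holim_n\SS_{\R}/\eta^n$ on the nose, so rather than pushing the completion through the realization I would use the characterization of $\eta$-completeness by orthogonality: a spectrum is $\eta$-complete exactly when every map to it from an $\eta$-periodic spectrum is null, equivalently it is right orthogonal to every $\eta$-periodic spectrum.

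First I would construct the map and reduce to a completeness statement. Since $\ReC$ is triangulated, symmetric monoidal, and sends $\eta$ to $\eta$ and $\SS_{\R}$ to $\SS_{C_2}$, it carries the $\eta$-Bockstein tower $\{\SS_{\R}/\eta^n\}_n$ to $\{\SS_{C_2}/\eta^n\}_n$ and so induces the natural map
\[
f\colon \ReC\bigl((\SS_{\R})\comp{\eta}\bigr)\longrightarrow (\SS_{C_2})\comp{\eta}.
\]
The unit $\SS_{\R}\to(\SS_{\R})\comp{\eta}$ is an $\SS_{\R}/\eta$-equivalence; since $\SS_{\R}/\eta^n$ is a finite iterated extension of suspensions of $\SS_{\R}/\eta$, it is an $\SS_{\R}/\eta^n$-equivalence for every $n$, hence so is $\ReC$ of it. Therefore $\ReC((\SS_{\R})\comp{\eta})\otimes\SS_{C_2}/\eta^n\wkeq\ReC(\SS_{\R}/\eta^n)\wkeq\SS_{C_2}/\eta^n$ for every $n$, and taking $\holim_n$ identifies $\ReC((\SS_{\R})\comp{\eta})\comp{\eta}\wkeq(\SS_{C_2})\comp{\eta}$ compatibly with $f$. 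In other words, up to this identification $f$ is the $\eta$-completion map of $A:=\ReC((\SS_{\R})\comp{\eta})$, so it suffices to prove that $A$ is $\eta$-complete.

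The crux is that $A$ is $\eta$-complete. Let $R$ be the right adjoint of $\ReC$. By adjunction, $A$ is right orthogonal to an $\eta$-periodic $Z\in\SH^{C_2}$ as soon as $R(Z)$ is $\eta$-periodic, because $(\SS_{\R})\comp{\eta}$ is $\eta$-complete: $\Hom_{C_2}(Z,A)\wkeq\Hom_{\R}(R(Z),(\SS_{\R})\comp{\eta})\wkeq 0$. And $R$ does preserve $\eta$-periodicity: it is lax symmetric monoidal with $R(\Sigma^{\sigma}(-))\wkeq\Sigma^{\alpha}R(-)$ (adjoint to $\ReC\circ\Sigma^{\alpha}\wkeq\Sigma^{\sigma}\circ\ReC$), and it carries multiplication by $\eta\in\pi^{C_2}_{\sigma}\SS_{C_2}$ to multiplication by $\eta\in\pi^{\R}_{\alpha}\SS_{\R}$, since the latter maps to the former under $\pi^{\R}_{\star}\SS_{\R}\to\pi^{\R}_{\star}R(\SS_{C_2})$; as $R$ preserves equivalences, invertibility of $\eta$ on $Z$ forces invertibility of $\eta$ on $R(Z)$. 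Hence $A$ is $\eta$-complete and $f$ is an equivalence. Equivalently, once $A$ is known to be $\eta$-complete one can finish by noting that $f$ becomes the tautological equivalence $\ReC(\SS_{\R}/\eta)\xrightarrow{\wkeq}\SS_{C_2}/\eta$ after $(-)\otimes\SS_{C_2}/\eta$, so $\cofiber(f)$ is $\eta$-periodic while also being right orthogonal to the $\eta$-complete spectra $A$ and $(\SS_{C_2})\comp{\eta}$; it therefore admits no nonzero self-map and vanishes.

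The main obstacle is precisely this completeness claim for $A$: exactness and monoidality of $\ReC$, the identity $\ReC(\eta)=\eta$, and the standard fact that $\SS_{\R}\to(\SS_{\R})\comp{\eta}$ is an $\SS_{\R}/\eta$-equivalence are all formal, but because $\ReC$ is only a left adjoint one genuinely needs the orthogonality reformulation together with the observation that its right adjoint carries $\eta$-periodic objects to $\eta$-periodic objects. An alternative would be to fracture along $2$: $(-)\comp{\eta}$ commutes with the arithmetic pullback square, and on the $2$-inverted part $(-)\comp{\eta}$ agrees with Morel's summand $(-)^{+}$, which $\ReC$ preserves because $\ReC(\varepsilon)=\varepsilon$ and $\ReC(\SS_{\R}[1/2])=\SS_{C_2}[1/2]$; but the remaining $(2,\eta)$-complete corner is no more tractable than the original statement, so I would not take that route.
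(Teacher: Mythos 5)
Your reduction of the statement to the claim that $A:=\ReC((\SS_\R)\comp{\eta})$ is $\eta$-complete is correct and standard (as is the identification $A\comp{\eta}\wkeq(\SS_{C_2})\comp{\eta}$), but the proof of that claim --- the crux of your argument --- is broken: you use the adjunction in the wrong direction. If $R$ is the \emph{right} adjoint of $\ReC$, then the adjunction identifies $\Hom_{C_2}(\ReC(W),Z)$ with $\Hom_{\R}(W,R(Z))$; it says nothing about $\Hom_{C_2}(Z,\ReC(Y))$, which is what the orthogonality criterion for $\eta$-completeness requires. To run your argument you would need a \emph{left} adjoint to $\ReC$ carrying $\eta$-periodic objects to $\eta$-periodic objects, and $\ReC$ has no left adjoint (it does not preserve infinite products or limits --- indeed, its failure to commute with the limit $\holim_n \SS_\R/\eta^n$ is the whole issue). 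There is no formal repair: the assertion that $\ReC$ preserves $\eta$-completeness of the sphere is essentially equivalent to the proposition itself, and it genuinely requires non-formal input. If it were formal, one could dispense with both Bachmann's theorem and the Adams spectral sequence comparison, which is too good to be true.

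You also dismiss the fracture-along-$2$ route on the grounds that the $(2,\eta)$-complete corner is ``no more tractable than the original statement,'' but that is exactly the route the paper takes, and that corner \emph{is} tractable: $\ReC(\SS\comp{2,\eta})\to(\SS_{C_2})\comp{2,\eta}$ is checked to be an equivalence by comparing the motivic and $C_2$-equivariant mod-$2$ Adams spectral sequences as in \cite[Proposition 2.4]{HO:E2M} (the realization map induces an isomorphism of $E_2$-pages), while the $2$-inverted edge of the square is handled by \aref{lemma:split} together with \aref{thm:Bman}. So the correct strategy is the one you set aside; the one you pursued cannot be made to work as stated.
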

\begin{proof}

Let $\SS=\SS_\RR$ and $\eta$-complete the $2$-primary fracture square for $\SS$ in order to produce the bicartesian square
\[
\xymatrix{
  \SS\comp{\eta}\ar[r]\ar[d] &\SS[1/2]\comp{\eta}\ar[d]\\
  \SS\comp{2,\eta}\ar[r] &(\SS\comp{2}[1/2])\comp{\eta}.
}
\]
Applying $\ReC$ results in a homotopy pullback square which maps to the corresponding fracture square for $(\SS_{C_2})\comp{\eta}$.  The maps between the vertices on the right edge of the square are equivalences by \aref{lemma:split}.  Thus it suffices to show that $\ReC(\SS\comp{2,\eta})\to (\SS_{C_2})\comp{2,\eta}$ is an equivalence.  This may be checked on the level of Mackey functor homotopy groups by comparing the motivic and $C_2$-equivariant Adams spectral sequences as in \cite[Proposition 2.4]{HO:E2M}, concluding the proof.
\end{proof}

\section{Comparing stable stems}\label{sec:stems}
In this section we establish a range of bidegrees in which the map on stable stems $\pi^{\R}_{m+n\alpha}(\SS_{\R}) \to \pi^{C_2}_{m+n\sigma}(\SS_{C_2})$ induced by equivariant Betti realization is  an isomorphism.

Recall that Dugger-Isaksen establish a range in which 
$2$-complete stems are isomorphic. 
\begin{theorem}[{\cite[Theorem 4.1]{DI:stems}}]\label{thm:DI}
	The map $\pi_{m+n\alpha}^{\R}((\SS_{\R})^{\wedge}_{2})\to \pi_{m+n\alpha}^{C_2}((\SS_{C_2})^{\wedge}_{2})$ is an isomorphism if $m\geq 2n-5$ and an injection if $m=2n-6$. 
\end{theorem}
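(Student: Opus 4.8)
The plan is to compare the $\R$-motivic and $C_2$-equivariant mod-$2$ Adams spectral sequences for the two sphere spectra, via the map induced by equivariant Betti realization (following the strategy of \cite{DI:stems}). First I would set these up: the motivic one has $E_2 = \Ext_{\AA^{\R}}(\M_2^{\R},\M_2^{\R})$, where $\M_2^{\R} = \F_2[\tau,\rho]$ is the mod-$2$ motivic cohomology of a point, and the equivariant one, built from the Bredon spectrum $H\ul{\F_2}$, has $E_2 = \Ext_{\AA^{C_2}}(H^{\star}_{C_2},H^{\star}_{C_2})$ over the $C_2$-equivariant Steenrod algebra, where $H^{\star}_{C_2} = H^{\star}_{C_2}(\mathrm{pt};\ul{\F_2})$ is the $RO(C_2)$-graded Bredon cohomology ring. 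Since $\ReC$ carries the $\R$-motivic $H\F_2$ to (a completion of) the $C_2$-equivariant mod-$2$ Eilenberg--MacLane spectrum with $\tau\mapsto u$ and $\rho\mapsto a$, it induces a map of Adams spectral sequences, and on $E_2$ this is the evident map $\Ext_{\AA^{\R}}(\M_2^{\R},\M_2^{\R})\to\Ext_{\AA^{C_2}}(H^{\star}_{C_2},H^{\star}_{C_2})$.

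The algebraic input is the identification of the $\R$-motivic Steenrod algebra with the $C_2$-equivariant one (due to Hu--Kriz and Voevodsky), under which $\M_2^{\R} = \F_2[\tau,\rho]$ is precisely the positive cone of $H^{\star}_{C_2}$, the complement being the negative cone. Hence the $E_2$-map is an isomorphism onto the part of the target built from the positive cone, and a degree count shows that the negative cone contributes to the $E_2$-page only in bidegrees $m+n\sigma$ with $m\le 2n-6$, the line $m=2n-6$ receiving the first such classes; so on $E_2$ the map is an isomorphism for $m\ge 2n-5$ and injective with negative-cone cokernel for $m=2n-6$. I would then run the standard comparison-in-a-range argument for spectral sequences: Adams differentials $d_r\colon(s,f)\to(s-1,f+r)$ drop the stem by one and preserve the internal grading, so those entering the line $m=2n-6$ originate in the isomorphism range while those leaving it land in the negative-cone region, whence an induction on pages shows the $E_r$-map stays an isomorphism for $m\ge 2n-5$ and injective for $m=2n-6$; strong convergence together with a vanishing line bounding Adams filtrations in each stem then carries this to the claimed statement on homotopy groups. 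As a consistency check one can organize the same computation by the $\rho$-Bockstein spectral sequence, using that $\SS_\R[\rho^{-1}]$ and $\SS_{C_2}[\rho^{-1}]$ become classical compatibly under realization (via $\RRe_\R$ and $\Phi^{C_2}$; cf.\ \aref{thm:Bman}) while $\SS_{C_2}/\rho\wkeq\Sigma^{1-\sigma}C_{2+}$ has $\pi^{C_2}_{m+n\sigma}\iso\pi^{s}_{m+n}$.

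I expect the main obstacle to be the convergence bookkeeping rather than the $E_2$-level algebra. One must track which completions the abutments of the two spectral sequences actually carry---the motivic one converges to a completion of $\SS_\R$ at $2$ and $\eta$ (and perhaps $\rho$), and the equivariant one to a completion of the \emph{genuine} $\SS_{C_2}$---whereas the theorem concerns $(\SS_\R)\comp{2}$ and $(\SS_{C_2})\comp{2}$ themselves, and one must also reconcile the genuine and Borel forms of the $C_2$-equivariant spectral sequence. The point to verify is that in the range $m\ge 2n-5$ none of this matters: the $\eta$- and $\rho$-inverted contributions to the $2$-complete stems are controlled through \aref{thm:Bman}, the real realization $\RRe_\R$, and the geometric fixed points $\Phi^{C_2}$, and they do not enter this range. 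Pinning down the precise boundary---isomorphism at $m=2n-5$, only injectivity at $m=2n-6$---and ruling out filtration jumps there requires careful translation between the $RO(C_2)$-grading and the stem--filtration indexing of the spectral sequences; once strong convergence with a uniform vanishing line is in place, the rest is formal.
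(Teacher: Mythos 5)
This statement is not proved in the paper at all: it is imported verbatim from Dugger--Isaksen \cite[Theorem 4.1]{DI:stems}, so there is no internal proof to compare against. Your sketch is a faithful outline of Dugger--Isaksen's actual argument (injectivity of the map of cobar complexes with cokernel the negative cone, a filtration-sensitive degree count locating that cokernel, and a comparison of strongly convergent Adams spectral sequences whose abutments are identified with the $2$-completions via \cite{HKO} and \cite[Theorem 2.10]{HO:E2M}), and it is exactly the template the paper itself follows in its odd-primary adaptation in \aref{lem:coarse}--\aref{thm:isorange}; the one place to be careful, as you note, is that the bound on where the negative cone contributes depends on the Adams filtration $f$ and not only on the stem, which is why the paper's odd-primary analogue states the vanishing region as $k-f\geq\cdots$ before passing to homotopy groups.
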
 
\begin{remark}
There are isomorphisms $\pi_{m+n\alpha}((\SS_{\R})^{\wedge}_{2})\iso \pi_{m+n\alpha}((\SS_{\R})^{\wedge}_{2,\eta})$ for all $m,n$ 
by \cite[Theorem 1]{HKO}. Similarly there are isomorphisms 
$\pi_{m+n\sigma}^{C_2}((\SS_{C_2})^{\wedge}_{2})\iso \pi_{m+n\sigma}^{C_2}((\SS_{C_2})^{\wedge}_{2,\eta})$ for all $m,n$ by \cite[Theorem 2.10]{HO:E2M}. Dugger-Isaksen's result can thus equivalently be stated as a comparison between $(2,\eta)$-complete stable stems.	
\end{remark}

Recall the discussion of motivic and $C_2$-equivariant cobar complexes from \cite[\S3]{DI:stems}, noting that all of these constructions may be made with $H\F_p$, $p$ odd, in place of $H\F_2$.    The significance of the $p$-primary cobar complex is that it forms the $E_1$-page of the $p$-primary Adams spectral sequence (in the motivic or equivariant context).  In order to concisely express the properties of these spectral sequences, let $\SS$ be $\SS_k$ ($k$ a field) or $\SS_{C_2}$, let $\M_p$ denote the homology of a point with coefficients in $\F_p$ or $\ul\F_p$, let $\AA_p$ denote the $p$-primary motivic or $C_2$-equivariant dual Steenrod algebra, let $C_p^*$ denote the $p$-primary motivic or $C_2$-equivariant cobar complex, let $\gamma$ be $\alpha$ or $\sigma$, depending on context, and let $\Ext_{\AA_p}^{*,*+*\gamma}(\M_p,\M_p)$ denote the homology of $C_p^*$ (which is also $\Ext$ in the category of $\AA_p$-comodules).

\begin{theorem}[{\cite[Theorem 1]{HKO} and \cite[Theorem 2.10]{HO:E2M}}]\label{thm:ass}
The (motivic or $C_2$-equivariant) $p$-primary Adams spectral sequence has $E_1$-page $C_p^*$ and $E_2$-page $\Ext_{\AA_p}^{*,*+*\gamma}(\M_p,\M_p)$; it is strongly convergent with
\[
  E_2^{s,m+n\gamma}\implies \pi_{m-s+n\gamma}\SS\comp{p,\eta}.
\]
When $p=2$, we have $\SS\comp{2,\eta}\simeq \SS\comp{2}$ as long as $\operatorname{cd}_2(k[i])<\infty$ (in the motivic case).
\end{theorem}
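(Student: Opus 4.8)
For $p=2$ the assertion is the combination of \cite[Theorem 1]{HKO} in the motivic case and \cite[Theorem 2.10]{HO:E2M} in the $C_2$-equivariant case, so the plan is to check that the odd-primary case follows by the same arguments. First I would recall the standard $H\F_p$-based Adams resolution of $\SS$ (which we take to be $\SS_k$ or $\SS_{C_2}$, with $H\F_p$ replaced by $H\ul\F_p$ in the equivariant case): setting $\overline{H}=\hofib(\SS\to H\F_p)$, one has the tower $\cdots\to\overline{H}^{\wedge 2}\to\overline{H}\to\SS$ whose layers satisfy $\overline{H}^{\wedge s}/\overline{H}^{\wedge(s+1)}\simeq H\F_p\wedge\overline{H}^{\wedge s}$ and are therefore generalized Eilenberg--Mac Lane spectra. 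Applying $[S^{m+n\gamma},-]$ yields an exact couple and hence a spectral sequence whose $E_1$-page is $\bigoplus_s\pi_{*+*\gamma}(H\F_p\wedge\overline{H}^{\wedge s})$, with $d_1$ induced by the boundary maps of the tower.

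Identifying this $E_1$-page with the cobar complex $C_p^*$ is then a matter of K\"unneth theory. The input is the computation of the odd-primary motivic (resp.\ $C_2$-equivariant) dual Steenrod algebra $\AA_p=\pi_{*+*\gamma}(H\F_p\wedge H\F_p)$ together with the fact that it is flat---indeed free---as an $\M_p$-module; this is the only ingredient needed to run the cobar formalism of \cite[\S3]{DI:stems} at odd $p$ exactly as at $p=2$. Flatness supplies isomorphisms exhibiting $\pi_{*+*\gamma}(H\F_p\wedge\overline{H}^{\wedge s})$ as the degree-$s$ term $C_p^s$, compatibly with the boundary maps, so that $(E_1^{*,*},d_1)=(C_p^*,d_{\mathrm{cobar}})$. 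Passing to cohomology then identifies $E_2$ with $\Ext_{\AA_p}^{*,*+*\gamma}(\M_p,\M_p)$, which is by definition the homology of $C_p^*$ (equivalently, the standard comparison of cobar cohomology with comodule $\Ext$ over the Hopf algebroid $(\M_p,\AA_p)$).

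The subtle point, and the one I expect to be the main obstacle, is strong convergence together with the identification of the target. By construction the spectral sequence converges to the homotopy groups of the $H\F_p$-nilpotent completion $\SS^{\wedge}_{H\F_p}=\holim_s\SS/\overline{H}^{\wedge s}$, so one must show (i) that this convergence is strong, with no $\lim^1$-obstruction---which, as in \cite{HKO}, rests on a vanishing line in $\Ext_{\AA_p}$ ensuring that the Adams filtration is finite in each internal bidegree---and (ii) that $\SS^{\wedge}_{H\F_p}\simeq\SS\comp{p,\eta}$. Step (ii) is the heart of the matter: for $p=2$ it is precisely what is established in \cite{HKO} and \cite[Theorem 2.10]{HO:E2M}, and for odd $p$ one reruns the nilpotent-completion analysis of those references with $H\F_p$ in place of $H\F_2$. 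The essential structural feature persists at odd primes---the $\eta$-completion is genuinely needed, since over a real closed field $\eta$ is not nilpotent even after $p$-completion, so $\SS^{\wedge}_{H\F_p}$ cannot be merely the $p$-completion. Finally, when $p=2$ and $\operatorname{cd}_2(k[i])<\infty$, the extra input---already contained in \cite{HKO}---is that in that case $(\SS_k)\comp{2}$ is itself $\eta$-complete, so the target simplifies to $(\SS_k)\comp{2}$.
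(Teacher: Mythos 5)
The paper offers no proof of this statement at all: it is recalled verbatim from the cited references, so the ``proof'' consists of the citations to \cite[Theorem 1]{HKO} and \cite[Theorem 2.10]{HO:E2M}. Your sketch is a correct and faithful outline of what those references actually do (Adams tower, identification of $E_1$ with the cobar complex via freeness of $\AA_p$ over $\M_p$, and identification of the $H\F_p$-nilpotent completion with $\SS\comp{p,\eta}$), so it matches the paper's approach.
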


The Dugger-Isaksen result is obtained by comparing cobar complexes. We first extend this method to odd $p$ to obtain an isomorphism range on $(p,\eta)$-complete stems. We begin by recalling some facts about the motivic and equivariant Steenrod algebras at odd primes.
Write $\AA_{\F_p}$ for the classical mod-$p$ dual Steenrod algebra. Recall that 
$$
\AA_{\F_p} = \Sym_{\F_p}(\tau_0,\tau_1, \ldots, \xi_1,\xi_2,\ldots)
$$ 
is a free graded commutative algebra, 
where  $|\tau_i| = p^i$ and $|\xi_i|= p^i-1$.

 Recall that $\M_p^\R := \bigoplus_{m,n\in \ZZ} \pi_{m+n\alpha}^\R H\F_p = \F_p[\theta]$ where $\theta$ has degree $2-2\alpha$. This follows from the affirmative resolution of the Bloch-Kato conjecture \cite[Theorem 6.1]{Voevodsky:Z/p} together with \cite[Theorem 7.4]{SV:BK}. In the equivariant case we have  $\M_p^{C_2} := \bigoplus_{m,n\in \ZZ} \pi_{m+n\sigma}^{C_2}H\ul\F_p=\F_p[\theta,\theta^{-1}]$, see \emph{e.g.}~\cite[Theorem 2.8]{Dugger:krss}.
The dual motivic Steenrod algebra over $\RR$ is equal to
$$
\AA_{\R} \iso \M_p^\R\otimes_{\F_p}\AA_{\F_p} = \M_p^{\R}[\tau_0,\tau_1,\ldots, \xi_1, \xi_2,\cdots]/(\tau_0^2,\tau_1^2,\ldots),
$$
where the elements of $\AA_{\F_p}$ are considered to be bigraded by assigning weights so that  $|\tau_i| = p^i + (p^i-1)\alpha$ and $|\xi_i| = (p^i-1) + (p^i-1)\alpha$, see \cite[Remark 12.12]{Voevodsky:reducedpower}.

Similarly, the dual $C_2$-equivariant Steenrod algebra is equal to
$$
\AA_{C_2} \iso \M_p^{C_2}\otimes_{\F_p}\AA_{\F_p} = \M_p^{C_2}[\tau_0,\tau_1,\ldots, \xi_1, \xi_2,\cdots]/(\tau_0^2,\tau_1^2,\ldots),
$$
where in this case elements of $\AA_{\F_p}$ are considered to be bigraded by assigning weights so that  $|\tau_i| = p^i + (p^i-1)\sigma$ and 
$|\xi_i| = (p^i-1) + (p^i-1)\sigma$.

Equivariant Betti realization $\RRe_{B}^{C_2}$ induces maps $\M_p^{\R}\to \M_p^{C_2}$ and $\AA_{\R}\to \AA_{C_2}$ which have the obvious effects on the above named elements, \emph{i.e.}, $\theta\mapsto \theta$, $\tau_i\mapsto\tau_i$  and 
$\xi_i\mapsto\xi_i$.

Write $\M_p$ for either $\M_p^\R$ or $\M_p^{C_2}$. In both cases, the dual Steenrod algebra is free over $\M_p$, and a basis is given by monomials $\tau_0^{\epsilon_0}\tau_1^{\epsilon_1}\cdots \xi_1^{n_1}\xi_2^{n_2}\cdots$, where $\epsilon_i\in \{0,1\}$ and $n_i$ is a nonnegative integer. We write $\tau^{\epsilon}\xi^{n}$ for such a monomial. 
\begin{lemma}\label{lem:coarse}
	Suppose that $|\tau^{\epsilon}\xi^{n}| = k+\ell\alpha$. Then 
	$\displaystyle{k \leq \frac{p}{p-1}\ell +1}$. 
\end{lemma}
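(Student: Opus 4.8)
The plan is to reduce the statement to an elementary count of weights. Write the monomial as $\tau^{\epsilon}\xi^{n} = \tau_0^{\epsilon_0}\tau_1^{\epsilon_1}\cdots\,\xi_1^{n_1}\xi_2^{n_2}\cdots$ with $\epsilon_i\in\{0,1\}$ and $n_j\ge 0$, and use additivity of degree together with the formulas $|\tau_i| = p^i + (p^i-1)\alpha$ and $|\xi_i| = (p^i-1)+(p^i-1)\alpha$. First I would record
\[
k = \sum_{i\ge 0}\epsilon_i p^i + \sum_{j\ge 1} n_j(p^j-1),\qquad
\ell = \sum_{i\ge 0}\epsilon_i(p^i-1) + \sum_{j\ge 1}n_j(p^j-1),
\]
noting in particular that $\ell\ge 0$ since every summand is nonnegative.

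Subtracting, the $\xi$-contributions cancel and one is left with $k-\ell = \sum_{i\ge 0}\epsilon_i$, the number of exterior generators appearing in the monomial. So it suffices to show $\sum_{i\ge 0}\epsilon_i \le \ell/(p-1) + 1$. Since all terms in the formula for $\ell$ are nonnegative and $p^0-1 = 0$, I would bound
\[
\ell \ \ge\ \sum_{i\ge 1}\epsilon_i(p^i-1)\ \ge\ (p-1)\sum_{i\ge 1}\epsilon_i,
\]
using $p^i-1\ge p-1$ for $i\ge 1$. Dividing by $p-1>0$ gives $\sum_{i\ge 1}\epsilon_i \le \ell/(p-1)$, and adding the contribution $\epsilon_0\le 1$ of $\tau_0$ (which is the source of the additive constant, as $|\tau_0| = 1+0\cdot\alpha$) yields $\sum_{i\ge 0}\epsilon_i \le \ell/(p-1) + 1$. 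Combined with $k = \ell + \sum_{i\ge 0}\epsilon_i$ this gives $k \le \ell + \tfrac{\ell}{p-1} + 1 = \tfrac{p}{p-1}\ell + 1$, as claimed.

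There is no genuine obstacle here: the lemma is purely a bookkeeping estimate on the weights of the dual Steenrod algebra generators. The only points needing any care are that $\ell\ge 0$ (so that dividing by $p-1$ preserves inequalities) and that $\tau_0$ — the unique generator that contributes to $k$ but not to $\ell$ — is exactly what produces the $+1$.
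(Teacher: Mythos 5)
Your proof is correct and follows essentially the same route as the paper's: an elementary additive estimate on the weights of the generators, with $\tau_0$ isolated as the unique source of the additive constant $+1$. The only cosmetic difference is that you compute $k-\ell=\sum_i\epsilon_i$ globally and bound the number of $\tau_i$'s, whereas the paper verifies the homogeneous inequality $k\le\tfrac{p}{p-1}\ell$ generator by generator and then multiplies by $\tau_0$; both amount to the same bookkeeping.
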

\begin{proof}
 The bidegree of $\xi_i$  satisfies 
 $k< \frac{p}{p-1}\ell$ and the bidegree of $\tau_i$ satisfies   
 $k \leq \frac{p}{p-1}\ell$ provided $i\geq 1$. It follows that if $\epsilon_0=0$ then the bidegree of $\tau^{\epsilon}\xi^{n}$ satisfies $k\leq \frac{p}{p-1}\ell$. If $\epsilon_0 =1$ then write $\tau^{\epsilon}\xi^{n} = \tau_0\tau^{\epsilon'}\xi^n$, where $\epsilon_0'=0$. This element thus satisfies the inequality $k \leq \frac{p}{p-1}\ell +1$.  	
\end{proof}
 
\begin{lemma}\label{lem:cobar}
	The map $C^f_\R \to C^f_{C_2}$ is 
	\begin{enumerate}[(i)]
		\item an injection in all degrees, and 
		\item an isomorphism if 
		$k-f\geq \left\lfloor\frac{p}{p-1}\ell  -\frac{4p-2}{p-1}\right\rfloor +1 $. 
	\end{enumerate}
\end{lemma}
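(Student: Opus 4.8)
The plan is to identify the map $C^f_\R\to C^f_{C_2}$ explicitly as a localization map and then read off both statements from a weight count. First I would note that passing from $\AA_\R$ to $\AA_{C_2}$ amounts to inverting $\theta$: indeed $\M_p^{C_2}=\M_p^\R[\theta^{-1}]$, and the presentations $\AA_\R=\M_p^\R\otimes_{\F_p}\AA_{\F_p}$, $\AA_{C_2}=\M_p^{C_2}\otimes_{\F_p}\AA_{\F_p}$ together with the compatibility of equivariant Betti realization with the named generators exhibit $\AA_{C_2}=\AA_\R\otimes_{\M_p^\R}\M_p^{C_2}=\AA_\R[\theta^{-1}]$, compatibly with coaugmentations. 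Since localization is exact and commutes with tensor products over $\M_p^\R$, the same holds after passing to augmentation ideals and to cobar powers, so $C^f_{C_2}\cong C^f_\R[\theta^{-1}]$ with $C^f_\R\to C^f_{C_2}$ the localization map (in particular a map of complexes, being extension of scalars). Concretely, $C^f_\R$ is free over $\M_p^\R=\F_p[\theta]$ on the tensors $b=\tau^{\epsilon_1}\xi^{n_1}\otimes\cdots\otimes\tau^{\epsilon_f}\xi^{n_f}$ of $f$ nontrivial monomials, and an $\F_p$-basis of $C^f_{C_2}$ in bidegree $k+\ell\alpha$ is given by the elements $\theta^j b$ with $j\in\Z$ and $|b|=(k-2j)+(\ell+2j)\alpha$, the image of $C^f_\R$ being the span of those with $j\ge 0$. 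Statement (i) is then immediate: in each bidegree the map sends a subset of a basis to a basis.

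For (ii), surjectivity of $C^f_\R\to C^f_{C_2}$ in bidegree $k+\ell\alpha$ is exactly the assertion that no basis tensor $b$ as above has bidegree $(k+2m)+(\ell-2m)\alpha$ for any $m\ge 1$. Here I would invoke \aref{lem:coarse}: applied to each of the $f$ tensor factors and summed, any such $b$ with $|b|=K+L\alpha$ satisfies $K\le\tfrac{p}{p-1}L+f$. Substituting $K=k+2m$, $L=\ell-2m$ and rearranging gives $\tfrac{p}{p-1}\ell-k+f\ge 2m\bigl(1+\tfrac{p}{p-1}\bigr)=2m\cdot\tfrac{2p-1}{p-1}\ge\tfrac{4p-2}{p-1}$. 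Contrapositively, if $\tfrac{p}{p-1}\ell-k+f<\tfrac{4p-2}{p-1}$ — equivalently $k-f>\tfrac{p}{p-1}\ell-\tfrac{4p-2}{p-1}$, i.e. $k-f\ge\bigl\lfloor\tfrac{p}{p-1}\ell-\tfrac{4p-2}{p-1}\bigr\rfloor+1$ since $k-f$ is an integer — no such $b$ exists, so the map is surjective, hence an isomorphism by (i).

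I do not expect a genuine obstacle: the substantive content is \aref{lem:coarse}, which is already established, and the remaining point is just the observation that inverting $\theta$ can only create classes in bidegrees $(k+2m)+(\ell-2m)\alpha$, which the weight bound excludes in the stated range. The only care needed is bookkeeping — confirming that the identification $C^f_{C_2}=C^f_\R[\theta^{-1}]$ is a map of complexes, and checking the floor/strict-inequality translation, using $2\bigl(1+\tfrac{p}{p-1}\bigr)=\tfrac{4p-2}{p-1}$ and that the least integer strictly exceeding a real number $x$ is $\lfloor x\rfloor+1$.
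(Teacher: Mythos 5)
Your proposal is correct and follows essentially the same route as the paper: identify $C^f_{C_2}$ as the $\theta$-localization $\M_p^{C_2}\otimes_{\M_p^\R}C^f_\R$ of a free $\M_p^\R$-module (giving injectivity), and bound the bidegrees of the cokernel elements $\theta^{-m}b$, $m\geq 1$, by summing \aref{lem:coarse} over the $f$ tensor factors. Your explicit substitution $K=k+2m$, $L=\ell-2m$ and the identity $2\bigl(1+\tfrac{p}{p-1}\bigr)=\tfrac{4p-2}{p-1}$ reproduce exactly the paper's inequality $k\leq\tfrac{p}{p-1}\ell+f-\tfrac{4p-2}{p-1}$ for the cokernel, and the floor translation is handled correctly.
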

\begin{proof}
	We have that $C^f_{C_2} \iso \M_p^{C_2}\otimes_{M_p^{\R}}C^f_{\R}$, that
		 $C^f_{\R}$ is free over $\M_p^\R$, and that the map $\M^p_{\R}\to \M^{p}_{C_2}$ is injective. It follows that $C^f_\R \to C^f_{C_2}$ is injective.

		 Let $Z = [z_1 | z_2 | \cdots | z_f]$ be a cobar element of Adams filtration $f$, where each $z_i$ is of the form 
		 $\tau^{\epsilon}\xi^{n}$, of bidegree $k_i+\ell_i\alpha$. By \aref{lem:coarse} we have that $k_i\leq \frac{p}{p-1}\ell_i+1$. Summing over $i$, we find that if $|Z| = k+\ell\alpha$, then 
		 $k\leq \frac{p}{p-1}\ell +f$. The cokernel of $C^f_\R \to C^f_{C_2}$ consists of elements of the form
		 $\theta^{-n}Z$ for $n\geq 1$. The elements $\theta^{-n}$ lie above the line of slope\footnote{We use the standard convention in which the $k$-axis is horizontal and the $\ell\alpha$-axis is vertical.} $(p-1)/p$ passing through $\theta^{-1}$ and so we find that these satisfy the inequality 
		 $k\leq \frac{p}{p-1}\ell - \frac{4p-2}{p-1}$. It follows that the bidegree of 
		 $\theta^{-n}Z$ satisfies the inequality 
		 $k\leq \frac{p}{p-1}\ell + f - \frac{4p-2}{p-1}$. Thus the cokernel is zero in bidegrees satisfying $k> \frac{p}{p-1}\ell + f - \frac{4p-2}{p-1}$.  

\end{proof}

Recall the form of the motivic and $C_2$-equivariant Adams spectral sequences from \aref{thm:ass}.  
Equivariant Betti realization induces a map between these spectral sequences which we can now analyze. 

\begin{proposition}\label{prop:ext}
	The map 
	$\Ext^{(f, k+\ell\alpha)}_{\AA_\R}(\M_p^{\R}, \M_p^{\R}) \to 
	\Ext^{(f, k+\ell\alpha)}_{\AA_{C_2}}(\M_p^{C_2}, \M_p^{C_2}) $
	is an injection if $k-f= \left\lfloor\frac{p}{p-1}\ell  -\frac{4p-2}{p-1}\right\rfloor$ and an isomorphism if 
	$k-f\geq \left\lfloor\frac{p}{p-1}\ell  -\frac{4p-2}{p-1}\right\rfloor +1$.
\end{proposition}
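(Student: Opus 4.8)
The plan is to deduce \aref{prop:ext} from \aref{lem:cobar} by a routine piece of homological algebra that promotes an isomorphism range on cobar complexes to an isomorphism range on their cohomology.

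First recall that the cobar differential $d\colon C_p^f\to C_p^{f+1}$ preserves the internal bidegree $k+\ell\alpha$ and raises the Adams filtration $f$ by one. Hence, for each fixed $(k,\ell)$, the maps $C^f_\R\to C^f_{C_2}$ of \aref{lem:cobar} assemble into a map of cochain complexes (in the variable $f$, with the internal bidegree suppressed from the notation)
$$\varphi^{\bullet}\colon C^{\bullet}_{\R}\longrightarrow C^{\bullet}_{C_2},$$
whose cohomology in degree $f$ is, by definition and by \aref{thm:ass}, the group $\Ext^{(f,k+\ell\alpha)}_{\AA_\R}(\M_p^\R,\M_p^\R)$, resp. $\Ext^{(f,k+\ell\alpha)}_{\AA_{C_2}}(\M_p^{C_2},\M_p^{C_2})$. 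By \aref{lem:cobar}(i) the map $\varphi^{\bullet}$ is a monomorphism in every degree, so it is part of a short exact sequence of cochain complexes
$$0\longrightarrow C^{\bullet}_{\R}\xrightarrow{\ \varphi^{\bullet}\ } C^{\bullet}_{C_2}\longrightarrow Q^{\bullet}\longrightarrow 0,$$
and by \aref{lem:cobar}(ii) the cokernel complex satisfies $Q^{f}=0$ in internal bidegree $k+\ell\alpha$ whenever $k-f\geq\big\lfloor\tfrac{p}{p-1}\ell-\tfrac{4p-2}{p-1}\big\rfloor+1$.

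Next I would pass to the associated long exact sequence in cohomology, which in the fixed internal bidegree reads
$$\cdots\to H^{f-1}(Q)\to H^{f}(C^{\bullet}_{\R})\to H^{f}(C^{\bullet}_{C_2})\to H^{f}(Q)\to\cdots.$$
Write $m_0:=\big\lfloor\tfrac{p}{p-1}\ell-\tfrac{4p-2}{p-1}\big\rfloor$. If $k-f=m_0$, then $k-(f-1)=m_0+1$, so $Q^{f-1}=0$ in bidegree $k+\ell\alpha$, hence $H^{f-1}(Q)=0$ there and the map $H^{f}(C^{\bullet}_{\R})\to H^{f}(C^{\bullet}_{C_2})$ is injective. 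If instead $k-f\geq m_0+1$, then also $k-(f-1)\geq m_0+2$, so both $Q^{f-1}$ and $Q^{f}$ vanish in bidegree $k+\ell\alpha$; therefore $H^{f-1}(Q)=0=H^{f}(Q)$ there and the map is an isomorphism. Via the identifications of the previous paragraph this is precisely the statement of \aref{prop:ext}. (One can equally well argue by a direct diagram chase: a map of cochain complexes that is injective in all degrees, an isomorphism in degree $f$, and surjective in degree $f-1$ induces an isomorphism on $H^{f}$, and dropping the surjectivity in degree $f$ leaves injectivity on $H^{f}$.)

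I do not anticipate a genuine obstacle here; once \aref{lem:cobar} is in hand the argument is formal. The only points that need a little care are bookkeeping ones. First, one must check that the cobar differential respects the internal grading, which is what legitimizes analyzing one internal bidegree at a time and gives the short exact sequence of complexes above. Second, one must track the single filtration shift: the isomorphism range for $H^{f}$ is governed by the vanishing of $Q$ in filtrations $f-1$ \emph{and} $f$, and since the $(f-1)$-condition is automatically implied by the $f$-condition, this range coincides with the one in \aref{lem:cobar}(ii) rather than being strictly smaller. Finally, one should note that the inequality cutting out the vanishing locus of $Q$ is an inequality between integers, so that "$k-f>\tfrac{p}{p-1}\ell-\tfrac{4p-2}{p-1}$" is the same as "$k-f\geq m_0+1$"; this is how the floor function enters the statement of \aref{prop:ext}.
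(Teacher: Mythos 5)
Your argument is correct and is essentially the paper's: the proof in the paper simply cites \cite[Lemma 3.4]{DI:stems} together with \aref{lem:cobar}, and that cited lemma is exactly the piece of homological algebra you prove inline (a degreewise-injective map of cochain complexes that is an isomorphism in a range of filtrations induces an isomorphism/injection on cohomology in the corresponding range). Your bookkeeping of the filtration shift and the floor function matches the paper's statement, so nothing is missing.
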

\begin{proof}
	This follows from \cite[Lemma 3.4]{DI:stems} and \aref{lem:cobar}.
\end{proof}

\begin{lemma}
	$\Ext^{(f, k+\ell\alpha)}_{\AA_\R}(\M_p^{\R}, \M_p^{\R})$ is a finite-dimensional $\F_p$-vector space for all $f,k,\ell$. 
\end{lemma}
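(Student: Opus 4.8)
The plan is to prove the slightly stronger statement that the cobar group $C^f_\R$ is already finite-dimensional over $\F_p$ in each internal degree $k+\ell\alpha$. Since the cobar differential preserves internal degree and raises the filtration $f$ by one, $\Ext^{(f,k+\ell\alpha)}_{\AA_\R}(\M_p^\R,\M_p^\R)$ is a subquotient of $(C^f_\R)_{k+\ell\alpha}$, so this suffices.

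First I would unwind the cobar complex. Since $\AA_\R\iso \M_p^\R\otimes_{\F_p}\AA_{\F_p}$ is free over $\M_p^\R$ on the monomials $\tau^{\epsilon}\xi^{n}$, its augmentation ideal $\overline{\AA}_\R$ is free over $\M_p^\R$ on the \emph{nontrivial} such monomials, and hence $C^f_\R = \overline{\AA}_\R^{\otimes_{\M_p^\R} f}$ is free over $\M_p^\R=\F_p[\theta]$ on the cobar elements $[w_1|\cdots|w_f]$, where each $w_i$ is a nontrivial monomial in the $\tau$'s and $\xi$'s. Writing $|w_i|=k_i+\ell_i\alpha$, a spanning set for $(C^f_\R)_{k+\ell\alpha}$ is thus given by the elements $\theta^a[w_1|\cdots|w_f]$ with $a\geq 0$, $2a+\sum_i k_i=k$, and $-2a+\sum_i \ell_i=\ell$.

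The crux is then a boundedness observation of exactly the flavor of \aref{lem:coarse}. Every generator $\tau_j,\xi_j$ has non-negative bidegree, so $k_i,\ell_i\geq 0$; and every nontrivial monomial has $k_i+\ell_i\geq 1$, since $\tau_0$ is the generator of smallest $(k+\ell)$-degree. Adding the two degree relations eliminates $a$ and gives $\sum_i(k_i+\ell_i)=k+\ell$. Hence the group vanishes unless $k+\ell\geq f$, and each $w_i$ satisfies $k_i+\ell_i\leq k+\ell$. Because $\tau_j$ and $\xi_j$ have $(k+\ell)$-degree $2p^j-1$ and $2(p^j-1)$ respectively, only finitely many generators can divide such a $w_i$ and their exponents are bounded, so there are only finitely many admissible tuples $(w_1,\dots,w_f)$; for each one $\sum_i k_i$ is determined, forcing $a=(k-\sum_i k_i)/2$, so at most one $a$ contributes. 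Therefore $(C^f_\R)_{k+\ell\alpha}$ is finitely generated over $\F_p$.

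I do not anticipate a genuine obstacle. The one subtlety worth recording is that $\M_p^\R=\F_p[\theta]$ is infinite-dimensional, so one might fear that powers of $\theta$ produce infinitely many monomials in a fixed bidegree; but $\theta$ has bidegree $2-2\alpha$, which lies on the anti-diagonal $k+\ell=0$, so the $(k+\ell)$-grading exactly cancels the $\theta$-contribution and the whole matter reduces to the classical fact that the cobar complex $C^f_{\F_p}$ of $\AA_{\F_p}$ is of finite type (note $k+\ell$ is precisely the classical topological degree on $\AA_{\F_p}$). The identical argument shows $(C^f_{C_2})_{k+\ell\sigma}$ is finite-dimensional as well, though only the real statement is needed in what follows.
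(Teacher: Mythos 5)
Your argument is correct. It takes a mildly different route from the paper: the paper applies the universal coefficient theorem to the isomorphism $C_{\R}\iso \M_p^{\R}\otimes_{\F_p}C_{\F_p}$ to identify $\Ext_{\AA_{\R}}^{*,*+*\alpha}(\M_p^\R,\M_p^\R)$ with $\Ext_{\AA_{\F_p}}^{*,*}(\F_p,\F_p)\otimes_{\F_p}\M^\R_p$ (up to regrading) and then invokes the finite-type property of classical $\Ext$ over the Steenrod algebra, whereas you stay entirely at the chain level and prove the stronger statement that each graded piece $(C^f_\R)_{k+\ell\alpha}$ of the cobar complex is already finite-dimensional. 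The two proofs rest on the same structural facts --- freeness of $\AA_\R$ over $\M_p^\R=\F_p[\theta]$ on the monomials $\tau^\epsilon\xi^n$ and the observation that $\theta$ lives on the anti-diagonal $k+\ell=0$ while every generator has positive $(k+\ell)$-degree --- but yours is more elementary (no need to commute homology past the base change) and, as you note, makes transparent that $k+\ell$ is exactly the classical topological degree, which is the bookkeeping the paper leaves implicit in the phrase ``up to a grading shift.'' The paper's version buys a cleaner statement (an explicit computation of the $\Ext$ groups, not just their finiteness), which is in the spirit of \aref{lem:cobar}; your version buys a self-contained counting argument in the style of \aref{lem:coarse}. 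Either suffices for the application in \aref{thm:isorange}.
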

\begin{proof}
	Writing $C_{\F_p}$ for the classical cobar complex, we have $C_{\R} \iso \M^{\R}_p\otimes_{\F_p} C_{\F_p}$. The universal coefficient theorem then yields the isomorphism 
	$$
	\Ext_{\AA_{\R}}^{*,*+*\alpha}(\M_p^\R,\M_p^\R)\iso \Ext_{\AA_{\F_p}}^{*,*}(\F_p,\F_p)\otimes_{\F_p}\M^\R_p
	$$
	up to a grading shift, and this is a finite-dimensional $\F_p$-vector space in all degrees.
\end{proof}

\begin{theorem}\label{thm:isorange}
	The map $\pi^{\R}_{m+n\alpha}((\SS_{\R})^{\wedge}_{(p,\eta)})\to
	\pi^{C_2}_{m+n\sigma}((\SS_{C_2})^{\wedge}_{(p,\eta)})$ is
	\begin{enumerate}[(i)]
	\item an injection if 
	$m=\left\lfloor\frac{p}{p-1}n  -\frac{4p-2}{p-1}\right\rfloor$, and 	
	\item an isomorphism if 
	$m\geq \left\lfloor\frac{p}{p-1}n  -\frac{4p-2}{p-1}\right\rfloor +1$.
	\end{enumerate}
\end{theorem}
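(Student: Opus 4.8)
The plan is to compare the motivic and $C_2$-equivariant $p$-primary Adams spectral sequences of \aref{thm:ass} along equivariant Betti realization, following the template of the Dugger--Isaksen argument at the prime $2$ \cite[Theorem 4.1]{DI:stems}. As recalled in \aref{sec:stems}, $\ReC$ induces a map from the motivic $p$-primary Adams spectral sequence of $\SS_\R$ to the $C_2$-equivariant one of $\SS_{C_2}$; on $E_2$-pages this is the comparison map of \aref{prop:ext}, and by the strong convergence statement of \aref{thm:ass} it converges to the map $\pi^\R_{m+n\alpha}((\SS_\R)\comp{p,\eta})\to\pi^{C_2}_{m+n\sigma}((\SS_{C_2})\comp{p,\eta})$ in the theorem.

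The key bookkeeping observation is that a class in bidegree $E_r^{f,\,k+\ell\alpha}$ contributes to $\pi_{(k-f)+\ell\alpha}$, whereas the Adams differential $d_r\colon E_r^{f,\,k+\ell\alpha}\to E_r^{f+r,\,(k+r-1)+\ell\alpha}$ preserves the weight $\ell$ and lowers the quantity $k-f$ by exactly one. Fix $n$ and put $N:=\left\lfloor\frac{p}{p-1}n-\frac{4p-2}{p-1}\right\rfloor$. Then \aref{prop:ext} asserts that on $E_2$ the comparison map is injective in every bidegree with $\ell=n$ and $k-f=N$, and an isomorphism in every bidegree with $\ell=n$ and $k-f\geq N+1$. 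I would prove by induction on $r\geq 2$ that both conditions persist on every page $E_r$, hence on $E_\infty$: when $k-f\geq N+2$ all three bidegrees meeting a given spot under $d_r$ already satisfy the isomorphism condition, so passing to homology preserves it; the two boundary lines $k-f=N+1$ and $k-f=N$ are handled by a five-lemma--type diagram chase, using that a $d_r$ out of the line $k-f=N$ lands on the line $k-f=N-1$ (about which nothing is claimed, and nothing is needed), that a $d_r$ into $k-f=N$ comes from $k-f=N+1$ (where an isomorphism is already available), and that the line $k-f=N+1$ receives differentials only from $k-f=N+2$ and maps only to $k-f=N$. This is the step requiring the most care, but it is formally identical to the corresponding step in \cite[Theorem 4.1]{DI:stems}, carried out here relative to the line $\{k-f=N\}$.

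Finally I would descend from $E_\infty$ to homotopy. For fixed $m$ and $n$, the terms $E_\infty^{f,\,(m+f)+n\alpha}$, $f\geq 0$, are exactly the associated graded pieces of the Adams filtration on $\pi_{m+n\alpha}((\SS_\R)\comp{p,\eta})$ (and likewise on the $C_2$-equivariant side), and each of them lies on the line $k-f=m$ in weight $n$. Thus the previous step shows the comparison map is injective on this associated graded when $m=N$, and an isomorphism on it when $m\geq N+1$. Combining this with the strong convergence of both spectral sequences (\aref{thm:ass}) and the finite-dimensionality of the $E_2$-terms established above---so that one may run the five lemma along the towers $\{\pi_{m+n\alpha}/F^s\}_s$ and pass to the inverse limit---upgrades these conclusions to the same statements for $\pi^\R_{m+n\alpha}((\SS_\R)\comp{p,\eta})\to\pi^{C_2}_{m+n\sigma}((\SS_{C_2})\comp{p,\eta})$ itself, which are assertions (i) and (ii).
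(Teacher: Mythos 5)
Your proposal is correct and is exactly the argument the paper intends: the paper's proof of this theorem is a one-line reduction to \aref{prop:ext} ``using the same argument as in \cite[Theorem 4.1]{DI:stems},'' and your write-up simply fills in that standard Adams spectral sequence comparison (persistence of the injection/isomorphism range through the pages, then descent from $E_\infty$ via strong convergence and the finite-dimensionality of the $E_2$-terms).
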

\begin{proof}
This follows from \aref{prop:ext} using the same argument as in \cite[Theorem 4.1]{DI:stems}.
\end{proof}

\begin{remark}
In \cite{DI:stems},	Dugger-Isaksen establish a second isomorphism range on $2$-complete  stems. Namely, $\pi_{m+n\alpha}(\SS_{\R})^{\wedge}_{2} \to \pi_{m+n\sigma}(\SS_{C_2})^{\wedge}_{2}$ is an isomorphism if $m+n\leq -1$. A version of this range appears to hold on $(p,\eta)$-complete stems, with the difference that the map might be only surjective when $m+n=-1$. However, this second isomorphism range doesn't extend to integral stems and we do not pursue it further here.
\end{remark}

We now turn our attention to $p$-complete spheres.
\begin{proposition}\label{prop:oddp}
The map $\pi^{\R}_{m+n\alpha}((\SS_{\R})^{\wedge}_{p})\to
\pi^{C_2}_{m+n\sigma}((\SS_{C_2})^{\wedge}_{p})$ is
\begin{enumerate}
\item an injection if $m=\left\lfloor\frac{p}{p-1}n  -\frac{4p-2}{p-1}\right\rfloor$, and 	
\item an isomorphism if $m\geq \left\lfloor\frac{p}{p-1}n  -\frac{4p-2}{p-1}\right\rfloor +1$.
\end{enumerate}
\end{proposition}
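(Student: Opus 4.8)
The plan is to split both $(\SS_\R)^{\wedge}_p$ and $(\SS_{C_2})^{\wedge}_p$ along Morel's $\pm$-decomposition and to treat the two resulting summands by different means. First I would observe that for $p$ odd both of these spectra are $2$-periodic: the cofiber of multiplication by $2$ on $(\SS_\R)^{\wedge}_p$ is $(\SS_\R)^{\wedge}_p\wedge(\SS_\R/2)\simeq\holim_n \SS_\R/(2,p^n)$ (smashing with the dualizable object $\SS_\R/2$ commutes with the limit defining $p$-completion), and this vanishes since $2$ is invertible modulo $p^n$; the identical argument works over $C_2$. Consequently Morel's operations $(-)^\pm$ are defined on both spectra, and since $\ReC(\varepsilon)=\varepsilon$ the realization map is compatible with the decompositions $(\SS_\R)^{\wedge}_p\simeq((\SS_\R)^{\wedge}_p)^+\vee((\SS_\R)^{\wedge}_p)^-$ and its $C_2$-analogue. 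By \aref{lemma:split} (together with the fact that double completion at $p$ and $\eta$ is independent of order) the summands are $((\SS_\R)^{\wedge}_p)^+\simeq(\SS_\R)^{\wedge}_{p,\eta}$ and $((\SS_\R)^{\wedge}_p)^-\simeq(\SS_\R)^{\wedge}_p[\eta^{-1}]$, and likewise over $C_2$, so it suffices to understand $\ReC$ on each piece.

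On the $\eta$-complete summands the induced map is exactly $\pi^\R_{m+n\alpha}((\SS_\R)^{\wedge}_{p,\eta})\to\pi^{C_2}_{m+n\sigma}((\SS_{C_2})^{\wedge}_{p,\eta})$, which by \aref{thm:isorange} is an injection when $m=\lfloor\frac{p}{p-1}n-\frac{4p-2}{p-1}\rfloor$ and an isomorphism when $m\ge\lfloor\frac{p}{p-1}n-\frac{4p-2}{p-1}\rfloor+1$. The remaining task, and the technical heart of the argument, is to show that on the $\eta$-periodic summands the map $\pi^\R_{m+n\alpha}((\SS_\R)^{\wedge}_p[\eta^{-1}])\to\pi^{C_2}_{m+n\sigma}((\SS_{C_2})^{\wedge}_p[\eta^{-1}])$ is an isomorphism in \emph{every} bidegree, so that this summand cannot disturb the range just obtained. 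For this I would appeal to Bachmann's \aref{thm:Bman}: both spectra are $2$- and $\eta$-periodic, hence lie in $\SH_\R[1/2,\eta^{-1}]$ and $\SH^{C_2}[1/2,\eta^{-1}]$ respectively, and $\ReC$ restricts there to an equivalence of categories. Because $\ReC$ preserves colimits and sends $2\mapsto 2$, $\eta\mapsto\eta$, it commutes with these localizations, so $\ReC((\SS_\R)^{\wedge}_p[\eta^{-1}])\simeq\ReC((\SS_\R)^{\wedge}_p)[\eta^{-1}]$; it then remains to identify the latter with $(\SS_{C_2})^{\wedge}_p[\eta^{-1}]$, i.e.\ to check that the canonical comparison map $\ReC((\SS_\R)^{\wedge}_p)\to(\SS_{C_2})^{\wedge}_p$ is an equivalence after inverting $2$ and $\eta$. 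Composing with the geometric-fixed-points equivalence $\Phi^{C_2}\colon\SH^{C_2}[1/2,\eta^{-1}]\xrightarrow{\wkeq}\SH[1/2]$ and using $\RRe_\R=\Phi^{C_2}\circ\ReC$ from the proof of \aref{thm:Bman} reduces this to showing that $\RRe_\R((\SS_\R)^{\wedge}_p)\to\Phi^{C_2}((\SS_{C_2})^{\wedge}_p)$ is an equivalence in $\SH[1/2]$, which I would verify by recognizing both sides as the classical $p$-complete sphere via a comparison of $p$-primary Adams spectral sequences in the style of \cite[Proposition 2.4]{HO:E2M} and \aref{lemma:ReEta}.

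Finally I would assemble the pieces: in each bidegree the realization map is the direct sum of the $\eta$-complete map and the (everywhere iso) $\eta$-periodic map, so its injectivity and surjectivity in the bidegrees with $m$ on or above the stated line are governed solely by \aref{thm:isorange}, yielding the proposition. I expect the main obstacle to be precisely the identification of $\ReC((\SS_\R)^{\wedge}_p)[\eta^{-1}]$ with $(\SS_{C_2})^{\wedge}_p[\eta^{-1}]$: Bachmann's theorem supplies the equivalence of categories for free, but the genuine difficulty is the homotopical bookkeeping required to interchange $p$-completion with the localization $(-)[1/2,\eta^{-1}]$ and with the realization and geometric-fixed-points functors, so that the $[1/2,\eta^{-1}]$-localized realization of the $p$-complete motivic sphere really is the $p$-complete $C_2$-equivariant sphere rather than some other object of $\SH^{C_2}[1/2,\eta^{-1}]$.
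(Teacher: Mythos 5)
Your overall strategy is the same as the paper's: split $\SS^{\wedge}_p$ via Morel's idempotents into $(\SS^{\wedge}_p)^+\simeq\SS^{\wedge}_{(p,\eta)}$ and $(\SS^{\wedge}_p)^-\simeq\SS^{\wedge}_p[\eta^{-1}]$ using \aref{lemma:split}, handle the plus part by \aref{thm:isorange}, and show the minus part is an isomorphism in all bidegrees via \aref{thm:Bman}. Two comments. First, you never address $p=2$, and your argument cannot: the $2$-periodicity of $\SS^{\wedge}_p$ that launches the whole splitting fails there. The proposition as stated includes $p=2$, where the claimed range is exactly $m=2n-6$ and $m\geq 2n-5$; the paper simply quotes Dugger--Isaksen (\aref{thm:DI}) for that case, and you should do the same. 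Second, on the $\eta$-periodic summand your route diverges from the paper's and is where the risk lies. You propose to identify $\RRe^{C_2}_B((\SS_\R)^{\wedge}_p)[1/2,\eta^{-1}]$ with $((\SS_{C_2})^{\wedge}_p)[1/2,\eta^{-1}]$ by composing with $\Phi^{C_2}$ and comparing Adams spectral sequences; but both $\RRe^{C_2}_B$ and $\Phi^{C_2}$ are colimit-preserving functors and do not commute with the homotopy limit $\holim_r\SS/p^r$ defining $p$-completion, so the step ``recognize both sides as the classical $p$-complete sphere'' is not automatic and would need its own argument. The paper sidesteps this entirely with the observation that $(\SS^{\wedge}_p)^-\iso\lim_r((\SS/p^r)^-)$ (the idempotent splitting, being built from cofiber sequences, commutes with the tower): each $(\SS/p^r)^-$ already lives in $\SH[1/2,\eta^{-1}]$, where Bachmann's equivalence applies termwise to Moore spectra, and one passes to the limit via Milnor sequences. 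You would do well to replace your geometric-fixed-points detour with that reduction; otherwise the ``homotopical bookkeeping'' you correctly flag as the main obstacle remains an actual gap rather than a routine verification.
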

\begin{proof}
Write $\SS$ for either of $\SS_{\R}$ or $\SS_{C_2}$. If $p=2$, the statement of the proposition is Dugger-Isaksen's \aref{thm:DI}, so we can assume $p$ is odd. In this case $2$ is invertible in $\SS/p^r$ and in $\SS^\wedge_p$. By 
\aref{lemma:split} we have that 
$(\SS^{\wedge}_{p})^+ \wkeq \SS^{\wedge}_{(p,\eta)}$ and  $(\SS^{\wedge}_{p})^- \wkeq \SS^{\wedge}_{p}[\eta^{-1}]$ 
and so we have that 
$$
\SS^\wedge_p\wkeq \SS^{\wedge}_{(p,\eta)}\vee \SS^{\wedge}_{p}[\eta^{-1}].
$$ 
Note that we have an isomorphism 
$(\SS^\wedge_p)^{-} \iso \lim_r ((\SS/p^r)^{-})$. It follows from 
\aref{thm:Bman} that the map $\pi_{*+*\alpha}^{\R}((\SS_{\R})^{\wedge}_{p}[\eta^{-1}])\to \pi_{*+*\sigma}^{C_2}((\SS_{C_2})^{\wedge}_{p}[\eta^{-1}])$ is an isomorphism. 
The result thus follows from \aref{thm:isorange} 
and the direct sum decomposition of $\SS^{\wedge}_p$ above.

\end{proof}

\begin{theorem}\label{thm:betti}
	The map $\pi^{\R}_{m+n\alpha}(\SS_{\R})\to
	\pi^{C_2}_{m+n\sigma}(\SS_{C_2})$ is
	\begin{enumerate}[(i)]
		\item an injection if $m=2n-6$ and $n\geq 0$, and
		\item  an isomorphism if $m\geq 2n-5$ and $n\geq 0$.
	\end{enumerate}
\end{theorem}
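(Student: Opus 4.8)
The plan is to reduce the integral comparison to the $p$-complete comparisons of \aref{thm:DI} and \aref{prop:oddp} and to Bachmann's \aref{thm:Bman}, gluing these together by an arithmetic fracture square. First I would form, for $X=\SS_\R$ and for $X=\SS_{C_2}$, the square
\[
\xymatrix{
X\ar[r]\ar[d] & X_\Q\ar[d]\\
X\comp{}\ar[r] & (X\comp{})_\Q,
}
\]
where $X\comp{}:=\prod_p X\comp p$; this is homotopy cartesian because both $\SS_\R$ (by Morel's connectivity theorem) and $\SS_{C_2}$ are bounded below. Equivariant Betti realization carries the motivic square to the $C_2$-equivariant one via the evident comparison maps, built from the level identifications $\ReC(\SS_\R/p^n)\simeq\SS_{C_2}/p^n$ and $\ReC(\SS_\R[1/2])\simeq\SS_{C_2}[1/2]$. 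It then suffices to analyse the induced map of homotopy groups at each of the three decorated spheres and feed the result into the five lemma applied to the two Mayer--Vietoris sequences.

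At the profinite corner $\pi_\star(X\comp{})\cong\prod_p\pi_\star(X\comp p)$, so the comparison there is the product over $p$ of the maps $\pi^\R_{m+n\alpha}((\SS_\R)\comp p)\to\pi^{C_2}_{m+n\sigma}((\SS_{C_2})\comp p)$. By \aref{thm:DI} for $p=2$ and \aref{prop:oddp} for $p$ odd, each of these is an isomorphism once $m\ge 2n-5$ and an injection when $m=2n-6$, for every $n\ge 0$; here one records the elementary check that in this range of bidegrees the odd-primary threshold $\left\lfloor\frac p{p-1}n-\frac{4p-2}{p-1}\right\rfloor+1$ never exceeds $2n-5$, and that the source vanishes by connectivity when $m=2n-6<0$. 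The comparison at the profinite corner thus has the same behaviour, and since rationalization is exact on homotopy groups the identical statement holds at the corner $(X\comp{})_\Q$.

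The rational corner $X_\Q$ is where the hypothesis $n\ge 0$ is used. Rationally $2$ is invertible, so \aref{lemma:split} gives splittings $(\SS_\R)_\Q\simeq(\SS_\R)_\Q^+\vee(\SS_\R)_\Q^-$ and $(\SS_{C_2})_\Q\simeq(\SS_{C_2})_\Q^+\vee(\SS_{C_2})_\Q^-$, with the $\eta$-inverted summands $(\SS_\R)_\Q^-\simeq(\SS_\R)_\Q[\eta^{-1}]$, $(\SS_{C_2})_\Q^-\simeq(\SS_{C_2})_\Q[\eta^{-1}]$ and the $+$-summands $\eta$-complete. The $\eta$-inverted summands are compared by $\ReC$ inside $\SH_\R[1/2,\eta^{-1}]$ and $\SH^{C_2}[1/2,\eta^{-1}]$, so \aref{thm:Bman} makes that comparison an isomorphism in every bidegree. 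For the $\eta$-complete summands I would identify $(\SS_\R)_\Q^+$ with Morel's rational motivic cohomology spectrum $H\Q$ and $(\SS_{C_2})_\Q^+$ with the rational Borel summand of $\SS_{C_2}$, and compute their homotopy directly ($\pi_{m+n\alpha}(H\Q)=H^{-(m+n),-n}(\R;\Q)$ on the one hand, the rational homotopy of a Thom spectrum over $BC_2$ on the other). The key facts are that $H^{-(m+n),-n}(\R;\Q)$ vanishes for $n\ge 0$ unless $(m,n)=(0,0)$ — so in particular the motivic source is trivial whenever $n\ge 0$ and $m<0$ — and that at the origin the comparison is the rational part of Morel's isomorphism $\pi_0^\R\SS_\R=GW(\R)\xrightarrow{\sim}A(C_2)=\pi_0^{C_2}\SS_{C_2}$. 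Putting the two summands together, the comparison at $X_\Q$ is an isomorphism for $m\ge 2n-5$ and an injection for $m=2n-6$, for $n\ge 0$.

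With all three corners understood, the map of Mayer--Vietoris sequences and the five lemma give that $\pi^\R_{m+n\alpha}(\SS_\R)\to\pi^{C_2}_{m+n\sigma}(\SS_{C_2})$ is an isomorphism for $m\ge 2n-5$, $n\ge 0$; the injectivity at $m=2n-6$ then follows from the standard one-degree-up diagram chase, using that the corners are injective at $m=2n-6$ and are already isomorphisms at $m=2n-5$. The main obstacle is the $\eta$-complete (``plus'') part of the rational corner: it is the only input not formally implied by the $p$-complete and $\eta$-inverted comparisons, and it is precisely what forces the restriction to $n\ge 0$ — already in weight $n=-1$ the infinite-dimensional group $K^M_1(\R)\otimes\Q$ appears on the motivic side with no matching classes equivariantly, obstructing surjectivity. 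The remaining details — the compatibility of $\ReC$ with completions and localizations, and the bookkeeping of the Mayer--Vietoris degree shift — are routine.
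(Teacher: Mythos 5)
Your proposal is correct and follows essentially the same route as the paper: an arithmetic fracture square for $\SS_\R$ and $\SS_{C_2}$, the $p$-complete comparisons from \aref{thm:DI} and \aref{prop:oddp} at the profinite corner (with the same check that the odd-primary thresholds are dominated), and a split of the rational corner via \aref{lemma:split} into an $\eta$-inverted part handled by \aref{thm:Bman} and an $\eta$-complete part computed explicitly via $H\Q$ and the rational Borel part of $\SS_{C_2}$. The only cosmetic difference is that the paper phrases the fracture square as a cofiber sequence and checks the middle and right-hand terms directly rather than running a Mayer--Vietoris/five-lemma argument.
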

\begin{proof}
	Consider the comparison of long exact sequences of homotopy groups induced by cofiber sequences  
	$$
	\SS \to  	
	\Big(\prod_{p} \SS^{\wedge}_{p}\Big)\vee  \SS_{\Q} \to \Big(\prod_p \SS^{\wedge}_{p}\Big)_{\Q}.
	$$
obtained from the arithmetic fracture squares for $\SS_{\R}$ and $\SS_{C_2}$. 
It suffices to show that the comparison map $\pi_{m+n\alpha}^{\R}(-)\to \pi_{m+n\sigma}^{C_2}(-)$ at the middle and the righthand terms are injections if $m=2n-6$ and $n\geq 0$ and
an isomorphism if $m\geq 2n-5$ and $n\geq 0$.

The inequalities for the isomorphism range of $p$-complete stable stems from \aref{prop:oddp} for odd $p$ are dominated by Dugger-Isaksen's inequalities in  \aref{thm:DI}, for the $2$-complete stable stems. It follows that 
$\pi^\R_{m+n\alpha}(\prod_{p} (\SS_{\R})^{\wedge}_{p})\to   \pi^{C_2}_{m+n\alpha}(\prod_{p} (\SS_{C_2})^{\wedge}_{p})$ is an injection if  $m=2n-6$ and an isomorphism if $m\geq 2n-5$. Since we have that the map $\pi^\R_{m+n\alpha}(\prod_{p} (\SS_{\R})^{\wedge}_{p})_{\Q}\to   \pi^{C_2}_{m+n\alpha}(\prod_{p} (\SS_{C_2})^{\wedge}_{p})_{\Q}$ is a filtered colimit of these maps, it too is an injection if $m=2n-6$ and an isomorphism if $m\geq 2n-5$.

By \aref{thm:Bman} and \aref{lemma:split}, $\pi^\R_{*+*\alpha}(\SS_{\R})_{\Q}^-\to \pi^{C_2}_{*+*\alpha}(\SS_{C_2})_{\Q}^-$ is an isomorphism. 
By \cite[Theorems 11 and 16.2.13]{CD:motives}, $(\SS_{\R})_{\Q}^{+}\wkeq \HH\Q$, where $\HH\Q$ is the rationalized motivic cohomology spectrum. Thus, we have that
$\pi_{i +j\alpha}^{\R}(\SS_{\R})^+_{\Q} = 0$ whenever $j>0$. When $j=0$, we have that 
$\pi_{i}^{\R}(\SS_{\R})^+_{\Q} = 0$ if $i\neq 0$ and $\pi_{0}^{\R}(\SS_{\R})^+_\Q=\Q$. 
We have 
$$
\pi_{i +j\alpha}^{C_2}(\SS_{C_2})^+_{\Q} = \begin{cases}
\Q & j\textrm{ is even and } i+j = 0 \\
0 & \textrm{else}.
\end{cases}
$$
Note that if $j\geq 0$, this vanishing region satisfies $i\geq 2j-5$. The map $\pi_{0}^{\R}(\SS_{\R})^+_\Q\to \pi_{0}^{C_2}(\SS_{C_2})^+_\Q$ is an ismorphism. We conclude that $\pi_{m+n\alpha}^{\R}(\SS_{\R})^+_\Q\to \pi_{m+n\sigma}^{C_2}(\SS_{C_2})^+_\Q$ is  an injection if $m=2n-6$ and $n\geq 0$, and
  an isomorphism if $m\geq 2n-5$ and $n\geq 0$.
\end{proof}

\section{Proof of {\aref{thm:main}}}\label{sec:proof}
We finish by explaining how the comparison of stable stems in the previous section implies the embedding theorem. 

\begin{proposition}\label{prop:b}
	If 
	\begin{enumerate}
		\item[(i)] $\RRe^{C_{2}}_{B}:  [S^n,\SS_{\R}]_{\R} \xrightarrow{\iso} 
		[S^n,  \SS_{C_{2}}]_{C_{2}}$, and
		\item[(ii)] $\RRe^{C_{2}}_{B}:  [\spec(\C)_+\wedge S^n,\SS_{\R}]_{\R} \xrightarrow{\iso} 
		[C_{2\,+}\wedge S^n,  \SS_{C_{2}}]_{C_{2}}$
	\end{enumerate}
	are isomorphisms for all $n\in\Z$, then \aref{thm:main} is true for any real closed field $k$. 
\end{proposition}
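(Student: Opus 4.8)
The plan is to run a two-variable dévissage reducing full faithfulness of $c^*_{L/k}$ to the hom-set isomorphisms (i) and (ii), in the spirit of \cite{Levine:comparison} and the argument in \cite{HO:E2M}. First I would recall that $\SH^{C_2}$ is compactly generated by $\mathcal{G}=\{C_2/H_+\wedge S^n:H\leq C_2,\ n\in\Z\}$, that $c^*_{L/k}$ preserves arbitrary homotopy colimits, and that it carries each generator $C_2/H_+\wedge S^n$ to the compact object $\spec(L^H)_+\wedge S^n$ of $\SH_k$. Given this, the class of $Y\in\SH^{C_2}$ on which $[P,\Sigma^\ast Y]_{C_2}\to[c^*_{L/k}P,\Sigma^\ast c^*_{L/k}Y]_k$ is an isomorphism for every $P\in\mathcal{G}$ is localizing, and the class of $X$ on which $[X,\Sigma^\ast Y]_{C_2}\to[c^*_{L/k}X,\Sigma^\ast c^*_{L/k}Y]_k$ is an isomorphism for every $Y$ is a triangulated subcategory of $\SH^{C_2}$ closed under coproducts and retracts; so it suffices to show that
\[
c^*_{L/k}\colon [C_2/H_+\wedge S^m,\,C_2/K_+\wedge S^n]_{C_2}\longrightarrow [\spec(L^H)_+\wedge S^m,\,\spec(L^K)_+\wedge S^n]_k
\]
is an isomorphism for all $H,K\leq C_2$ and all $m,n\in\Z$.

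Next I would treat the four cases $(H,K)$. Using the induction/restriction and restriction/coinduction adjunctions on the equivariant side (together with the self-duality $C_2/e_+\wkeq\coind^{C_2}_e S^0$), the analogous base-change adjunctions on the motivic side (together with the ambidexterity $f_\#\wkeq f_*$ for the finite \'etale maps $f\colon\spec(L^H)\to\spec k$), and the compatibility of $c^*_{L/k}$ with all of these from \cite[\S 4.3]{HO:E2M}, the three cases in which $H$ or $K$ is trivial collapse to comparisons of classical stable stems (obtained by restricting $\SS_{C_2}$ to $e$) with $\pi_{\ast,0}$ of the sphere over the algebraically closed field $L=k[i]$; these are governed by hypothesis (ii) and its suspensions and duals, and in any event follow from Levine's theorem \cite{Levine:comparison} applied over $L$. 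The remaining case $(C_2,C_2)$ asks that $c^*_{L/k}\colon\pi^{C_2}_j(\SS_{C_2})\to\pi^k_j(\SS_k)$ be an isomorphism for all $j$. When $k=\R$ this is exactly hypothesis (i): since $\ReC$ and $c^*_{\C/\R}$ both preserve homotopy colimits and agree on $\mathcal{G}$, one has $\ReC\circ c^*_{\C/\R}\wkeq\id_{\SH^{C_2}}$, so the composite $\pi^{C_2}_j(\SS_{C_2})\xrightarrow{c^*}\pi^\R_j(\SS_\R)\xrightarrow{\ReC}\pi^{C_2}_j(\SS_{C_2})$ is the identity, and (i) then forces $c^*$ to be an isomorphism.

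Finally I would deduce the case $(C_2,C_2)$ for an arbitrary real closed $k$ from the case $k=\R$, following \cite{HO:E2M}: every real closed field contains the real closure $R_0$ of $\Q$, and the base-change map $\pi^{R_0}_j(\SS_{R_0})\to\pi^k_j(\SS_k)$ is an isomorphism — this can be checked on the $2$-complete, rational, and $\rho$-inverted summands, where by \aref{thm:Bman}, Morel's $\pm$-splitting, and $2$-complete \'etale descent the answer depends only on the absolute Galois group $C_2$ and the one-point real spectrum. Choosing an order embedding $R_0\hookrightarrow\R$ and invoking compatibility of $c^*_{L/k}$ with base change then propagates the $\R$-case through $R_0$ to $k$, and assembling the four cases gives full faithfulness. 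The main obstacle here is not homotopy-theoretic — that content is packaged entirely in (i) and (ii) — but organizational: keeping the adjunction, duality, and base-change compatibilities of $c^*_{L/k}$ bookkept tightly enough that the four hom-set comparisons are literally identified with (i), (ii), and Levine's theorem, and supplying the base-field insensitivity needed to descend from $\R$ to a general real closed field.
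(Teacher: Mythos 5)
Your proposal is correct and follows essentially the same route as the paper: dévissage to the compact generators $C_2/H_+\wedge S^n$, reduction to the case $k=\R$, $L=\C$ by base-change insensitivity of the weight-zero stems over real closed fields, and the retraction identity $\RRe^{C_2}_B\circ c^*_{\C/\R}=\id$ combined with hypotheses (i) and (ii). The paper simply delegates the first two steps to \cite[Theorem 2.21]{HO:E2M} and \cite[Proposition 2.20]{HO:E2M}, whereas you unpack them; the substance is the same.
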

\begin{proof}
	Let $k$ be a real closed field and $L=k[i]$. To prove \aref{thm:main}, it suffices to prove that 
	\begin{enumerate}
		\item[(a)]  $c^*_{L/k}:[S^n,\SS_{C_2}]_{C_2}
		\xrightarrow{\iso} [S^n, \SS_{k}]_k$, and 
		\item[(b)] $c^*_{L/k}:
		[C_{2\,+}\wedge S^n, \SS_{C_2}]_{C_2}\xrightarrow{\iso} 
		[\spec(L)_+\wedge S^n, \SS_{k}]_k$
	\end{enumerate}
	are isomorphisms for all $n\in \Z$, by the same argument as in the beginning of the proof of \cite[Theorem 2.21]{HO:E2M}.	
	
	To prove that the maps in (a), (b) are isomorphisms, we can assume that $k=\R$ and $L = \C$, by the same argument as in \cite[Proposition 2.20]{HO:E2M}. We now consider the  $C_2$-equivariant Betti realization functor
	$\RRe^{C_2}_{B}:\SH_{\R} \to \SH^{C_2}$. Since $\RRe_{B}^{C_2}\circ c^*_{\C/\R} = \id$, it follows that (a) and (b) are isomorphisms.  
\end{proof}

\begin{corollary}[\aref{thm:main}]
	Let $k$ be a real closed field and $L=k[i]$ be its algebraic closure. Then the functor
	$$
	c_{L/k}^*:\SH^{C_2}\to \SH_k
	$$
	is a full and faithful embedding.
\end{corollary}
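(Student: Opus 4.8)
The plan is to deduce the corollary formally from \aref{prop:b}, feeding in \aref{thm:betti} for the sphere and Levine's theorem for the free cell; at that point no new computation is needed. By \aref{prop:b}, it suffices to prove that $\ReC$ induces isomorphisms $[S^n,\SS_\R]_\R \xrightarrow{\iso} [S^n,\SS_{C_2}]_{C_2}$ and $[\spec(\C)_+\wedge S^n,\SS_\R]_\R \xrightarrow{\iso} [C_{2\,+}\wedge S^n,\SS_{C_2}]_{C_2}$ for every $n\in\Z$; the reduction inside \emph{loc.~cit.}\ then promotes this to \aref{thm:main} for an arbitrary real closed field $k$.

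For condition (i) I would note that $[S^n,\SS_\R]_\R=\pi^\R_{n+0\alpha}\SS_\R$ and $[S^n,\SS_{C_2}]_{C_2}=\pi^{C_2}_{n+0\sigma}\SS_{C_2}$, so that the map in question is the weight-zero case of the realization map analyzed in \aref{thm:betti}. For $n\geq 0$ this lies in the isomorphism range of \aref{thm:betti}(ii) --- the theorem's weight being $0$ and its stem being $n$, and $n\geq 0$ satisfies $n\geq 2\cdot 0-5$ --- so the map is an isomorphism. For $n<0$ both groups vanish: the motivic one by Morel's stable $\A^1$-connectivity theorem and the equivariant one by connectivity of $\SS_{C_2}$; hence the map is trivially an isomorphism. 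Thus (i) holds in all degrees.

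For condition (ii) I would identify the two hom sets with classical invariants. The smooth base-change adjunction along the finite étale map $\spec\C\to\spec\R$ (together with the projection formula) identifies $[\spec(\C)_+\wedge S^n,\SS_\R]_\R$ with $[S^n,\SS_\C]_\C$, while restriction along $\{e\}\hookrightarrow C_2$ identifies $[C_{2\,+}\wedge S^n,\SS_{C_2}]_{C_2}$ with the classical stable stem $[S^n,\SS]$. Under these identifications, and using the standard compatibility of $\ReC$ with ordinary complex realization (its underlying functor is complex Betti realization precomposed with base change, exactly as in \cite[Proposition 2.20]{HO:E2M}), the map in (ii) becomes $\RRe_\C\colon [S^n,\SS_\C]_\C\to[S^n,\SS]$. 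This is an isomorphism by Levine's theorem \cite{Levine:comparison}: the constant functor $c^*\colon\SH\to\SH_\C$ is full and faithful and satisfies $\RRe_\C\circ c^*=\id$, so $\RRe_\C$ inverts $c^*$ on homotopy groups of the sphere. With (i) and (ii) verified over $\R$, \aref{prop:b} yields the corollary.

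I expect essentially all the difficulty to have been absorbed already --- into \aref{thm:betti} (which rests on Bachmann's \aref{thm:Bman}, the odd-primary Adams spectral sequence comparison of \aref{sec:stems}, and an arithmetic fracture square) and into Levine's comparison theorem. The only residual care needed in the assembly is (a) checking condition (i) \emph{at every} integer $n$, where the range of \aref{thm:betti} must be supplemented by a connectivity vanishing argument below degree zero, and (b) the bookkeeping with the base-change and restriction adjunctions that recognizes condition (ii) as an instance of Levine's theorem.
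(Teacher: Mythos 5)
Your proposal is correct and follows essentially the same route as the paper's proof: reduce to the two conditions of \aref{prop:b}, handle condition (i) by combining \aref{thm:betti} in weight zero with connectivity vanishing in negative degrees, and recognize condition (ii) as Levine's theorem via the base-change/restriction identifications. The extra detail you supply (Morel's connectivity theorem, the explicit adjunction bookkeeping) is exactly what the paper leaves implicit.
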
 
\begin{proof}
	If $i<0$ then $\pi^\R_i(\SS_\R) = \pi_i^{C_2}(\SS_{C_2})=0$ and so the map in \ref{prop:b}(i) is an isomorphism for $i<0$. It is an isomorphism for $i\geq 0$ by setting $n=0$ in \aref{thm:betti}. 
	The map in \ref{prop:b}(ii) is identical to the map 
	$[ S^n,\SS_{\C}]_{\C} \to 
	[S^n,  \SS]$ induced by complex Betti realization. This is an isomorphism by Levine's theorem \cite{Levine:comparison}. 
\end{proof}

\bibliographystyle{plain}
\bibliography{E2M}

\end{document}